\theoremstyle{plain}
\newtheorem{theorem}{Theorem}
\newtheorem{corollary}[theorem]{Corollary}
\newtheorem{prop}[theorem]{Proposition}
\theoremstyle{definition}
\newtheorem{definition}{Definition}
\newtheorem*{open}{Open Problems}
\newcommand{\PP}{\mathcal{P}}
\newcommand{\N}{\mathcal{N}}
\newcommand{\G}{\mathcal{G}}
\newcommand{\cclass}[1]{\ensuremath{\mathord{\rm #1}}} 
\newcommand{\invisibleComment}[1]{}
\DeclareMathOperator{\Mex}{Mex}
\title{Impartial coloring games}
\author{G. Beaulieu\\
{\tiny Universit de Sherbrooke\\
Ssherbrooke, J1K 2R1, Quebec.\\
gabriel.beaulieu@usherbrooke.ca}\\
K. Burke\\
{\tiny Wittenberg University\\
Springfield, OH 45503, USA.\\
kburke@wittenberg.edu}\\
E. Duch\^{e}ne\\
{\tiny Laboratoire GAMA,\\
Universit\'{e} Claude Bernard Lyon 1, Universit de Lyon, F-69622 France.\\
eric.duchene@univ-lyon1.fr}
}
\begin{document}
\maketitle
\begin{abstract}
 Coloring games are combinatorial games where the players alternate painting uncolored vertices of a graph one of $k > 0$ colors.  Each different ruleset specifies that game's coloring constraints.  This paper investigates six impartial rulesets (five new), derived from previously-studied graph coloring schemes, including proper map coloring, oriented coloring, 2-distance coloring, weak coloring, and sequential coloring.  For each, we study the outcome classes for special cases and general computational complexity.  In some cases we pay special attention to the Grundy function.
\end{abstract}

\section{Introduction and Background}
	
This paper deals with impartial coloring games on graphs. Roughly speaking, two players alternate coloring the vertices of a graph (under some specific constraints), until no more vertices can be painted. Following the normal play convention, the first player unable to color a vertex loses the game \cite{Win}.  In each of these rulesets, a description of the game state consists of a graph, a number of colors and a partial coloring of the vertices, adhering to the coloring restrictions of the ruleset.  The options for each position are those where exactly one new vertex has been colored.

\begin{definition}
  An \emph{impartial coloring ruleset} is an impartial combinatorial game ruleset where players alternate coloring the vertices of a graph.  The state of the game consists of: a graph, $G = (V, E)$, which may be either directed or undirected;  a positive integer, $k$, representing the number of colors; and a partial coloring of the vertices, $c: V \rightarrow \{1, \ldots, k\} \cup \{\text{`uncolored'}\}$.  Impartial coloring rulesets include some sort of restriction to which $c$ must adhere.  The options for each position are those which have the same state aside from a new coloring, $c'$ where $V = V'\cup \{v_0\}, c'(V') = c(V'), c'(v_0) \neq c(v_0)$ and $c(v_0) = \text{`uncolored'}$.
\end{definition}

We define many such rulesets, analyze outcome classes for different situations, and study the computational complexity of each.  In section \ref{properColoring}, we study the rules for \textsc{Proper $k$-coloring}, based on proper map-colorings.  In section \ref{orientedColoring}, we use oriented colorings on digraphs \cite{Sop} to define the games \textsc{Oriented $k$-coloring} and \textsc{Oriented Blue-Red-Coloring}.  In section, \ref{weakColoring}, we base \textsc{Weak $k$-coloring} on weak colorings, where each non-isolated vertex must be adjacent to a vertex of at least one other color \cite{Chew}.  In section, \ref{distanceColoring}, we use the $2$-distance coloring defined in \cite{Fer} to define the ruleset \textsc{2-distance Coloring}.  The last game, \textsc{Sequential Coloring}, defined first in \cite{Bod}, is covered in section \ref{sequentialColoring}, including an improvement to the original solution algorithm on paths and cycles.

The rest of this section is devoted to basic combinatorial game theory definitions, with a focus on impartial game theory.  Those who already have an understanding of the topic (known as ``gamesters'') may safely skip to the next section.  

\subsection{Combinatorial Game Theory Basics}

These standard definitions can be found in \cite{Win}. The set of nonnegative integers will be denoted by $\mathbb{N}$. For all $U\subset \mathbb{N}$, $\Mex(U)$ denotes the smallest non-negative integer not included in $U$. We will denote by $\mathbb{G}$ the set of positions of a given game. If $\textit{u}\in \mathbb{G}$, then $O(\textit{u})$ denotes the {\it options} of $u$, i.e., the set of positions reachable by a legal move from \textit{u}.\\

In combinatorial game theory, \emph{partisan} games are those for which both players do not always have the same options from a position. The investigation of partisan coloring games on graphs led to the map-coloring game (introduced by M. Gardner in 1981), and the corresponding game chromatic number \cite{Bar,gcn}. One can also refer to the game of {\it Col}, which is a partisan coloring game on planar graphs \cite{Win}. 

Alternatively, impartial games are those for which both players always have the same options.  Notably, the impartial game Nim was completely solved \cite{Nim} in 1905. In the case of impartial coloring games, there are few results in the literature. In the 90's, Bodlaender investigated online (or sequential) coloring games \cite{Bod,Onli,Onli2}. One can also consider \textsc{Node-Kayles} \cite{kayles} as an impartial coloring game with a unique color. In \cite{Bod}, it is shown that finding a winning strategy for some coloring games, such as the sequential coloring game, or \textsc{Node-Kayles} \cite{kayles} are \cclass{PSPACE}-complete. \invisibleComment{Sequential Coloring can be solved in poly-time, no? Yes, for 2 colors. But not in the general case} \\

Given a short\invisibleComment{Kyle: Made the change.  Yes, short games are those with bounded depth.  I believe this is standard notation in Lessons in Play. Eric: Ok Kyle I trust you. I don't have this book} impartial combinatorial game (i.e., an impartial game where any position cannot be visited twice), positions are classified as $\PP$ or $\N$: it is a $\PP$-position if the second player has a winning strategy, and an $\N$-position otherwise (the first player has a winning strategy). In addition, Sprague and Grundy introduced a function $\G:\mathbb{G}\rightarrow \mathbb{N}$ called the Grundy function, such that $\forall\textit{u},\textit{v} \in \mathbb{G}$
\begin{itemize}
	\item $\G(u)=0$ if and only if $u$ is a $\PP$-position,
	\item $\G(u)>0$ if and only if $u$ is a $\N$-position,
	\item $\G(u)=\G(v)$ if and only if playing on the sum $u+v$ is a $\PP$-position.
\end{itemize}

According to Sprague and Grundy \cite{Spr}, it turns out that $\forall$\textit{u}, $\G(u)=Mex(\G(O(u))$.\\


In terms of complexity, our objective is also to have a better idea of what can be expected for each of these games. Therefore, we will mainly focus our research on simple families of graphs such as paths and cycles.  As much as possible, we determine the computational complexity of these games, evidence of the difficulty for players to choose optimal moves on their turn.  (See \cite{Dem} for more information on computational complexity and its relevance to combinatorial game theory.).\invisibleComment{Added :)}

	
\section{Proper k-colorings}
\label{properColoring}

\begin{definition}
	\label{defjeucolor}
\textsc{Proper $k$-coloring} is an impartial coloring ruleset where no two adjacent vertices of the graph may be painted the same color.  
\end{definition}

When $k=1$, the game is known as \textsc{Node-Kayles} \cite{Win}, which is known to be \cclass{PSPACE}-complete \cite{Schae2}. As we show next, the general game is also \cclass{PSPACE}-complete for all $k$.

\begin{prop}
  \label{properPSPACE}
  Determining whether an instance of {\sc Proper $k$-coloring} is an $\N$-position is \cclass{PSPACE}-complete.
\end{prop}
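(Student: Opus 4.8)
The plan is to prove \cclass{PSPACE}-membership and \cclass{PSPACE}-hardness separately. Membership is routine: the game is short (each move colors one previously-uncolored vertex, so the game tree has depth at most $|V|$), and a position is a graph together with a partial proper coloring, which has polynomial size; a standard alternating-time / recursive search over options decides the $\N/\PP$ status in polynomial space, so \textsc{Proper $k$-coloring} lies in \cclass{PSPACE}. For hardness I would reduce from the $k=1$ case, i.e.\ from \textsc{Node-Kayles}, which is already known to be \cclass{PSPACE}-complete by Schaefer's result cited in the paper. So fix $k \ge 2$; given an instance $G=(V,E)$ of \textsc{Node-Kayles}, I need to build in polynomial time an instance of \textsc{Proper $k$-coloring} whose outcome matches.

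The key idea is to force the ``extra'' $k-1$ colors to be useless, so that the only meaningful moves replicate \textsc{Node-Kayles} (where coloring a vertex forbids its neighbors from being colored at all). First I would take $G$ and, to each vertex $v \in V$, attach a gadget that ``uses up'' colors $2, \dots, k$ in the neighborhood of $v$, so that in any reachable position $v$ can legally receive only color $1$. A clean way: attach to each $v$ a clique $K_{v}$ on $k-1$ fresh vertices, make every vertex of $K_v$ adjacent to $v$, and pre-color the $k-1$ vertices of $K_v$ with the $k-1$ distinct colors $2,3,\dots,k$. These pre-colored vertices are not options (they are already colored), they do not interact with the rest of the graph, and they force every legal coloring of $v$ to be color $1$. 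After this modification, a move in the new game is exactly ``pick an uncolored original vertex $v$ and color it $1$,'' which is legal iff no neighbor of $v$ (in $G$) is already colored --- precisely the \textsc{Node-Kayles} move on $G$. Hence the constructed position is an $\N$-position iff $G$ is an $\N$-position for \textsc{Node-Kayles}; the construction adds $(k-1)|V|$ vertices and is computable in polynomial time.

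The main obstacle to watch for is making sure the gadgets introduce no spurious legal moves and no spurious interactions: the pre-colored clique vertices must be mutually consistent (a proper $k$-coloring of $K_v$ needs exactly $k-1$ colors, which matches), they must not be adjacent to anything outside $\{v\}\cup K_v$, and one must confirm that coloring $v$ with color $1$ is always legal with respect to the gadget (color $1$ is unused in $K_v$, so yes). One should also double-check the degenerate parameter cases: when $k=1$ the statement is exactly \textsc{Node-Kayles} and there is nothing to do, so the reduction is only needed for $k\ge 2$; and the reduction must be uniform when $k$ is part of the input as well as for each fixed $k$. With those checks in place, combining membership and hardness gives that deciding whether an instance of \textsc{Proper $k$-coloring} is an $\N$-position is \cclass{PSPACE}-complete.
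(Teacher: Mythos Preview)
Your proof is correct and follows essentially the same approach as the paper: both show membership via the bounded game-tree depth, and both reduce from \textsc{Node-Kayles} by attaching $k-1$ pre-colored pendant vertices to each original vertex so that only color~$1$ remains available. The only cosmetic difference is that you make the gadget vertices a clique while the paper uses a star (edges only to $v_0$); the paper's version has the incidental advantage of preserving planarity, but this is not needed for the stated proposition.
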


\begin{proof}
  Since all instances of the game are short and the number of plays is bounded above by the number of unpainted vertices, the problem is in \cclass{PSPACE}.  To show hardness, we will reduce from \textsc{Node-Kayles}.  The reduction is straightforward; we simply connect each original Kayles vertex to new vertices, each painted one of the $k-1$ colors.

  Given an instance of \textsc{Node-Kayles}, $G = (V, E)$, create a new graph, $G' = (V', E')$ where 

$V' = \{v_0, \ldots , v_{k-1} : \forall v \in V \}$, and 

$E' = \{(v_0, v_i) : \forall v \in V, i \in \{1, \ldots, k-1\}\} \cup \{(u_0, w_0) : \forall (u, w) \in E \}$.

  \begin{figure}[!ht]
	  \label{fig:properKColoringReduction}
	  \centering
		  \includegraphics[width=60mm]{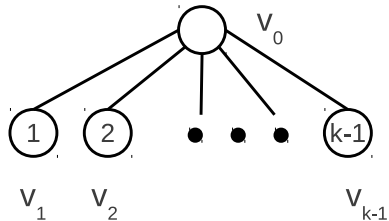}
	  \caption{Reduction of vertex, $v$, from \textsc{Node-Kayles} to the \textsc{Proper $k$-coloring gadget}.  $v_0$ is uncolored.}
  \end{figure}

  To complete the reduction, we paint all vertices $v_i$ where $i > 0$ with the $i^{th}$ color.  Now, only the vertices $v_0$ remain unpainted, all with only a single color available.  These are adjacent to neighbors just as their counterparts, $v$ are.  Thus, each move in this game is equivalent to the corresponding move on the \textsc{Node-Kayles} instance.  So, this game is also \cclass{PSPACE}-hard.  Note that this reduction preserves the planarity of the original graph.

  Together, determining whether a position of {\sc proper $k$-coloring} is in outcome class $\N$ is \cclass{PSPACE}-complete.
\end{proof}

The following proposition provides a strong symmetry argument to solve the {\sc proper k-coloring game} on several families of graphs.

\begin{prop}
	\label{gagnant}
	Let $G=(V,E)$ be a graph. If there exists an involution, \textit{s}, of $G$ satisfying:
	\begin{itemize}
		\item $\exists$! u $\in$V such that \textit{s}(u)=u
		\item $\forall$ v $\in V$, (v,\textit{s}(v)) $\not\in$ E,
	\end{itemize}
	then the starting position of \textsc{Proper $k$-coloring} on $G$ is in $\N$.
\end{prop}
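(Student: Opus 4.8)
The plan is to exhibit an explicit winning strategy for the first player based on a symmetry (mirroring) argument, which is the standard technique for such involution hypotheses. The first player's opening move will be to color the unique fixed vertex $u$; after that, the first player will always respond to the second player's move by playing the mirror image under $s$.

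First I would have Player~1 color $u$ with color~$1$ (any color will do, since $u$ has no colored neighbors yet). I then claim the following invariant holds before each of Player~2's moves: the current partial coloring $c$ satisfies $c(s(w)) = c(w)$ for every colored vertex $w$. This clearly holds after Player~1's opening move, since the only colored vertex is $u = s(u)$. Suppose it holds and Player~2 colors some vertex $v$ with a color $\alpha$; since Player~2 moved, $v$ was uncolored, hence so was $s(v)$ by the invariant, and in particular $v \neq u$, so $s(v) \neq v$. Player~1 now colors $s(v)$ with the same color $\alpha$. I must check this is legal: the neighbors of $s(v)$ are exactly $\{s(x) : x \in N(v)\}$ since $s$ is an automorphism; by the second hypothesis $v \notin N(v)$, so $s(v) \notin N(s(v))$, i.e.\ $v$ is not among these neighbors; any other colored neighbor $s(x)$ of $s(v)$ (with $x \in N(v)$, $x \neq v$) was colored before this round, hence by the invariant $c(s(x)) = c(x)$, and $x \in N(v)$ was already colored with $c(x) \neq \alpha$ since Player~2's move coloring $v$ with $\alpha$ was legal — so $s(v)$ can legally receive $\alpha$. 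After Player~1's response the invariant is restored (the two newly colored vertices $v, s(v)$ are mirror images with equal colors, and all previously colored vertices already satisfied it).

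Consequently Player~1 can always move whenever Player~2 has just moved, so Player~2 is the one who eventually faces a position with no legal move; hence the starting position is in $\N$.

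The main obstacle — and the only place requiring care — is the legality check for Player~1's mirrored move: one must rule out the possibility that $s(v)$ itself is a problematic neighbor (handled by the hypothesis $(v,s(v)) \notin E$) and that $s(v)$ has some already-colored neighbor forcing a different color (handled by transporting the constraint back through $s$ and invoking the invariant together with the legality of Player~2's move). One should also note at the outset that $s$ being an involution without the hypothesis "$(v, s(v)) \notin E$" could fail, e.g.\ if $v$ and $s(v)$ were adjacent Player~1 could not copy the color; the two bulleted conditions are exactly what make the mirroring strategy well-defined.
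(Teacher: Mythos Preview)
Your proof is correct and follows essentially the same mirroring strategy as the paper: Player~1 colors the unique fixed vertex first, then maintains the invariant that $v$ and $s(v)$ carry the same color by copying each of Player~2's moves, with legality established by transporting neighbor constraints through the automorphism $s$. One small slip to fix: where you write ``by the second hypothesis $v \notin N(v)$, so $s(v) \notin N(s(v))$'' you actually need $s(v) \notin N(v)$ (equivalently $v \notin N(s(v))$), which is exactly what the hypothesis $(v,s(v)) \notin E$ says --- the conclusion ``$v$ is not among these neighbors'' is the right one, only the intermediate phrase is misstated.
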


\begin{proof}
Let us play the first move on the unique vertex \textit{u} satisfying $\textit{s}(u)=u$. We will recursively show that the first player is able to hold the following property $(P1)$ during the game: \\

$(P1)$ : For all vertex $v\in V\setminus \{u\}$, the vertices $v$ and $s(v)$ hold the same color before second player's turn. \\

The property $(P1)$ is true after the first move on $u$. Assume $(P1)$ is true after any odd number of moves.
	Without loss of generality, suppose that the second player paints some vertex $v_{0}\in V$ with the color $i\in \lbrace$1...k$\rbrace$. Since $(P1)$ was true before this move, then $s(v_{0})$ is not colored yet. Hence the first player chooses to color $s(v_{0})$ with the color $i$. This is allowed since $s$ is an involution: indeed, suppose there exists some vertex $v_{2}=s(v_{1}$), adjacent to $s(v_{0})$, and colored with $i$. Since $(P1)$ was true before the second player's move, $v_{1}$ is also colored with $i$. But since $v_{2}=s(v_{1})$ is adjacent to $s(v_{0})$, the vertex $v_{1}$ is adjacent to $v_{0}$, which yields a contradiction with the proper coloring constraint \invisibleComment{Which part does this yield a contradiction to, exactly? @Kyle:is it better?  Kyle: Much better!} Consequently we can color $s(v_{0})$ with $i$, and then $(P1)$ holds. \\
	
	Thus, the first player always has a legal move after the second player's turn. 
\end{proof}

For $k=2$, Proposition \ref{gagnant} has a similar result for deciding whether a game position is an $\N$-position.

\begin{prop}
	\label{perdant}
	Let G=(V,E) be a graph.  If there exists an involution, \textit{s}, without a fixed point, the starting position of {\sc Proper 2-coloring} with colors $\{R,B\}$ is in $\PP$.
\end{prop}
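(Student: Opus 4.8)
The plan is to hand the second player an explicit ``mirroring'' strategy built from the involution, very much in the spirit of the proof of Proposition \ref{gagnant}, but with the \emph{two colors swapped} across each pair $\{v,s(v)\}$. Since $s$ is a fixed-point-free automorphism of $G$, the vertex set partitions into orbits $\{v,s(v)\}$ of size exactly two. The strategy is: whenever the first player colors an uncolored vertex $v$ with a color $c\in\{R,B\}$, the second player replies by coloring $s(v)$ with the other color $\bar c$.

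The invariant I would carry through the game, proved by induction on the number of completed rounds, is the property $(P2)$: after each of the second player's moves, every orbit $\{v,s(v)\}$ is either entirely uncolored or carries both colors $R$ and $B$ (one on each vertex). This holds vacuously at the start. Assume it holds just before the first player colors an uncolored vertex $v$ with $c$. Then $(P2)$ forces $s(v)$ to be uncolored as well, and $s(v)\neq v$ because $s$ has no fixed point, so $s(v)$ is still available to the second player after the first player's move. It then only remains to verify that painting $s(v)$ with $\bar c$ is legal; granting this, the orbit $\{v,s(v)\}$ ends the round with both colors and $(P2)$ is restored while every other orbit is untouched.

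The heart of the argument — and the step to write carefully — is the legality check. Suppose, for contradiction, that at the moment the second player is to move, $s(v)$ has a neighbour $w$ already colored $\bar c$. Then $w\neq v$ (the first player just colored $v$ with $c\neq\bar c$) and $w\neq s(v)$ (no loops), so both $w$ and $s(w)$ were colored \emph{before} the first player's move, and moreover $s(w)\neq v$ since $w\neq s(v)$. By $(P2)$ applied to the orbit $\{w,s(w)\}$, the vertex $s(w)$ carries the color opposite to $w$, namely $c$. Because $s$ is a graph automorphism and $w$ is adjacent to $s(v)$, the vertex $s(w)$ is adjacent to $s(s(v))=v$. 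Hence, at the instant the first player colored $v$ with $c$, the neighbour $s(w)$ of $v$ already had color $c$, contradicting the legality of the first player's move. So no such $w$ exists, and the reply is legal.

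It follows that the second player always has a move immediately after the first player does, so the first player is the one who eventually cannot move: the starting position is in $\PP$. The only subtle points in a full write-up are the small case distinctions $w\neq v$, $w\neq s(v)$, $s(w)\neq v$ that license invoking $(P2)$ and the legality of the first player's move; note in particular that if $v$ and $s(v)$ happen to be adjacent this causes no trouble, precisely because the second player answers with the opposite color $\bar c$ — which is exactly why the hypothesis $(v,s(v))\notin E$ needed in Proposition \ref{gagnant} can be dropped here.
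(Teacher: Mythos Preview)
Your proof is correct and follows essentially the same approach as the paper's: the second player mirrors each move on $s(v)$ with the \emph{opposite} color, maintaining the invariant that paired vertices carry opposite colors, and the legality check goes through by pulling back a hypothetical blocking neighbour via the automorphism $s$ to contradict the legality of the first player's move. Your write-up is in fact more careful than the paper's about the small case distinctions ($w\neq v$, $w\neq s(v)$, $s(w)\neq v$), and your closing remark explaining why the non-adjacency condition of Proposition~\ref{gagnant} is no longer needed here is a nice touch the paper omits.
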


\begin{proof}
	We will use a similar proof to Proposition \ref{gagnant}, by showing that the second player is able to hold the following property $(P2)$ during the game :\\

$(P2)$ : After each even move, for each colored vertex $v$, the image, $s(v)$, is painted the opposite color.\\

Trivially, $(P2)$ is true before the first move. Inductively, assume $(P2)$ is true after any even number of moves. Now suppose, without loss of generality, that the first player paints some vertex $v_{0}$ with the color $B$. Hence we can color $s(v_{0})$ with $R$: suppose indeed there exists some vertex $v_{2}=s(v_{1})$, adjacent to $s(v_{0})$, and colored with $R$. As $(P2)$ was true before the first player's move, $v_{1}$ is colored with $B$, and since $s$ is an involution, the vertices $v_{1}$ and $v_{0}$ are adjacent, and coloring $v_{0}$ with $B$ was not a legal move \invisibleComment{is this the contradiction? @Kyle:yes it is. I understand your remark, the sentence could be improved. Kyle: I think it's okay after rereading it.}. Therefore we can color $s(v_{0})$ with $R$.
\end{proof}

These two propositions allow us to come to further conclusions about the status of the game positions on several families of graphs. For example, when playing on paths with two colors, we have the following result:

\begin{corollary}
	\label{paths}
	{\sc Proper 2-coloring} starting positions are $\N$ positions when played on paths of odd length, and $\PP$ positions on paths of even length.
\end{corollary}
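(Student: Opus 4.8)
The plan is to apply Propositions \ref{gagnant} and \ref{perdant} to the path, using its reflection as the involution. Write the path on $n$ vertices as $v_1 v_2 \cdots v_n$ with edge set $\{v_i v_{i+1} : 1 \le i \le n-1\}$ (here I take the convention that the \emph{length} of the path is its number of vertices $n$), and let $s$ be the map $s(v_i) = v_{n+1-i}$. This is visibly an involution of the graph, and the whole corollary will follow by splitting on the parity of $n$.

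For the odd case I would argue as follows. When $n$ is odd, the equation $i = n+1-i$ has the unique solution $i = (n+1)/2$, so $s$ fixes exactly the middle vertex $v_{(n+1)/2}$ and nothing else; this is the first hypothesis of Proposition \ref{gagnant}. For the second hypothesis I must check that $v_i$ and $s(v_i) = v_{n+1-i}$ are never adjacent. Two vertices $v_a, v_b$ of a path are adjacent iff $|a-b| = 1$, so this reduces to verifying $|\,n+1-2i\,| \ne 1$ for every $i$, which is immediate because $n+1$ is even and hence $n+1-2i$ is even. Proposition \ref{gagnant} then yields that the starting position is an $\N$-position (in fact this part works for every $k$, not just $k = 2$).

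For the even case, when $n$ is even the equation $i = n+1-i$ has no integer solution, so $s$ is a fixed-point-free involution and Proposition \ref{perdant} applies directly, giving a $\PP$-position; note that here no non-adjacency condition enters, so the edges of the path never need to be inspected.

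The argument is essentially bookkeeping once the reflection is identified as the right involution; the only genuine computation is the parity check $|\,n+1-2i\,|\ne 1$ in the odd case, and the main thing to be careful about is fixing the length convention — a path of odd length must have an odd number of vertices so that there is a true central vertex for the first player to claim, and a path of even length an even number of vertices so that the reflection is fixed-point-free.
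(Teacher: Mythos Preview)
Your proof is correct and follows essentially the same approach as the paper: both use the reflection $v_i \mapsto v_{n+1-i}$ as the involution and then invoke Proposition~\ref{gagnant} for the odd case and Proposition~\ref{perdant} for the even case. Your treatment is in fact slightly more explicit, since you spell out the parity argument for the non-adjacency condition $|n+1-2i|\ne 1$, whereas the paper simply asserts it is ``easy to verify.''
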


\begin{proof}
	In the case of paths of odd length, let us label the vertices from $1$ to $2k-1$ for any positive integer $k$. Let $s$ be a function on the vertex, such that $s(i)=2k-i$. It is easy to verify that $s$ is an involution having a unique fixed point, with no edge between any vertex $u$ and $s(u)$. Hence, according to Proposition \ref{gagnant}, the position is a first-player win.\\
	
In the case of paths of even length with vertices labeled from $1$ to $2k$, we consider the function $s$ defined on $V$ such that $s(i)=2k-i+1$. It is easy to verify that $s$ is an involution without a fixed point. Hence, according to Theorem \ref{perdant}, the game is a loss for the first player.
\end{proof}

\begin{corollary}
	{\sc Proper 2-coloring} starting positions are in $\PP$ when played on cycles.
\end{corollary}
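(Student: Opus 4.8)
The plan is to treat even cycles and odd cycles separately, since only the even case is covered directly by Proposition~\ref{perdant}. For an even cycle $C_{2k}$ (a cycle has at least three vertices, so $k\ge 2$), label the vertices $v_0,v_1,\dots,v_{2k-1}$ cyclically and take $s$ to be the antipodal map $s(v_i)=v_{i+k}$, indices read modulo $2k$. Then $s$ is a fixed-point-free involution, and $(v_i,s(v_i))$ is never an edge, because an edge of $C_{2k}$ joins indices differing by $1$ or $2k-1$ and $k\notin\{1,2k-1\}$ when $k\ge 2$. Proposition~\ref{perdant} then places the starting position on $C_{2k}$ in $\PP$.

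For an odd cycle $C_n$ no fixed-point-free involution exists, so Proposition~\ref{perdant} does not apply and I would instead analyze the position obtained after the first move. By vertex-transitivity of the cycle and the symmetry between the two colours, we may assume the first player colours a fixed vertex $v_0$ with a fixed colour $c$. Deleting $v_0$ leaves the path $v_1-v_2-\cdots-v_{n-1}$, and since both endpoints $v_1,v_{n-1}$ are adjacent to $v_0$ they may no longer receive $c$. Call such a position --- a path on $\ell$ uncoloured vertices whose two endpoints are both forbidden one common colour --- a \emph{pinned path of length $\ell$}, and call a path on $\ell$ uncoloured vertices whose two endpoints are forbidden opposite colours an \emph{anti-pinned path of length $\ell$}. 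Thus $C_n$ is a $\PP$-position precisely when the pinned path of length $n-1$ is an $\N$-position, so it suffices to compute the Grundy values of these auxiliary positions.

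The key claim is: \emph{for every $\ell\ge 1$, the pinned path of length $\ell$ has Grundy value $1$, and the anti-pinned path of length $\ell$ has Grundy value $0$} (only the weaker fact that the pinned path is nonzero is needed, but the exact values keep the induction self-contained). I would prove this by simultaneous strong induction on $\ell$, checking $\ell\in\{1,2\}$ by hand. For larger $\ell$, a move colours some vertex $w_i$ and splits the path into the sub-paths on $w_1,\dots,w_{i-1}$ and on $w_{i+1},\dots,w_\ell$ (one possibly empty); each surviving sub-path is again pinned or anti-pinned, the colour forced at the newly exposed endpoint being the colour \emph{not} played at $w_i$, except that a vertex adjacent to an old endpoint may instead be left isolated, becoming either a \emph{free move} of Grundy value $1$ or a \emph{dead} vertex of Grundy value $0$. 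A case check on the colour played and the location of $w_i$ shows that from a pinned path every option is a disjoint sum with an \emph{even} number of Grundy-$1$ summands (so the option has Grundy value $0$), while from an anti-pinned path every option has an \emph{odd} number of Grundy-$1$ summands (Grundy value $1$); taking $\Mex$ then yields the stated values $1$ and $0$. Applying the claim with $\ell=n-1$ shows the post-first-move position is an $\N$-position, so the second player wins and $C_n\in\PP$.

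The main obstacle is exactly this case analysis in the Grundy claim: one must keep track of which colour becomes forced at each fresh endpoint, handle the degenerate configurations that occur at or next to the old endpoints (a dead vertex, a length-$1$ sub-path, an empty sub-path), and verify in every one of the finitely many cases that the number of Grundy-$1$ components has the required parity. Everything else --- the even case via Proposition~\ref{perdant}, and the reduction of the odd case to the claim through the symmetry of the first move --- is routine.
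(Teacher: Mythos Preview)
Your proposal is correct. The even-cycle case matches the paper (both invoke Proposition~\ref{perdant} with the obvious reflection/antipodal involution), but for odd cycles you take a genuinely different route.

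The paper does not compute Grundy values at all. Instead it gives the second player an explicit strategy: after the first player colours some vertex $v_0$ (say with $B$), the second player colours the vertex $v_2$ two steps away with the opposite colour $R$. This kills the intermediate vertex $v_1$, which can never be painted. Deleting $v_1$ leaves an even-length path whose two already-coloured endpoints carry opposite colours, and on this path the second player simply mirrors every move of the first player across the central reflection while swapping colours --- the same copying idea used in the proof of Proposition~\ref{perdant}. This one sentence replaces your entire inductive Grundy computation.

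Your approach, by contrast, analyses the position after \emph{one} move rather than two, and proves by simultaneous induction that every ``pinned'' path has Grundy value~$1$ and every ``anti-pinned'' path has Grundy value~$0$. The case analysis you flag as the main obstacle does go through cleanly (each option of a pinned path is a sum of either two pinned pieces or two anti-pinned pieces, hence Grundy~$0$; each option of an anti-pinned path is one of each, hence Grundy~$1$; the boundary degeneracies behave as you say). What you gain is strictly more information: exact Grundy values for a natural family of partially-constrained paths, which would be useful for analysing disjoint sums. What the paper's argument gains is brevity --- the clever first reply collapses the problem to a one-line symmetry argument with no induction and no case split.
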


\begin{proof}
	For cycles of even length, the proof is similar as in Corollary \ref{paths}. In the case of odd cycles, Propositions \ref{gagnant} and \ref{perdant} cannot be applied. We thus prove differently that there is a winning strategy for the second player:
Let $v_{0}$ be the first vertex colored (say with the color $B$) by the first player. Without loss of generality, let $v_{1}$ be an adjacent vertex of $v_0$ and $v_{2}$ the other adjacent vertex of $v_{1}$. The second player then colors $v_{2}$ with $R$. Hence, $v_{1}$ cannot be colored any more.
	As $v_{1}$ can not be colored, we can delete it from the graph. The second player can now apply the strategy given in the proof of Proposition \ref{gagnant} as if they played on a path of even length with both extremities having opposite colors. 
\end{proof}

By the same methods, Propositions \ref{gagnant} and \ref{perdant} solve the proper coloring game for other classes of graphs. Given any positive integer $k$, the {\sc proper k-coloring} starting positions are in $\N$ in the following cases:
	\begin{itemize}
		\item grids of size $n\times m$, when $n\times m$ is odd
		\item $k$-dimensional grids, when all dimensions are odd
		\item complete binary trees
		\item $k$-complete trees when $k$ is odd
		\item odd-length paths
	\end{itemize}
	By Proposition \ref{perdant}, the following classes of graphs yield {\sc Proper 2-coloring} starting positions in $\PP$:
	\begin{itemize}
		\item hypercubes
		\item grids of size $n\times m$, when $n\times m$ is even
		\item $k$-dimensional grids, with at least one even dimension
		\item paths and cycles of even length
	\end{itemize}

In addition, the Grundy function is fully determined in the case of paths and cycles. The only non-trivial case is for paths of odd length, which is detailed below.

\begin{prop} 	
For any starting position, $G$, of {\sc proper coloring} on a path of odd length, $\G(G)=1$.
\end{prop}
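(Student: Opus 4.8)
The plan is to reduce every position, at every stage of play, to a disjoint sum of independent ``segment'' games and to compute the Grundy value of each type of segment. For $k\ge 3$ this is immediate: a vertex of a path has at most two neighbours and hence at most two forbidden colours, so it is never blocked, play always lasts exactly $n$ moves, and a game of fixed odd length has Grundy value $1$. So the substance is the case $k=2$, with colours $R,B$, which I treat now. After any sequence of moves on a path the uncoloured vertices form maximal runs (``segments''); since the path has maximum degree $2$, a segment interacts with the rest of the position only through the colours of the at most two coloured vertices flanking it, and a move inside a segment never touches another segment. Hence, by Sprague--Grundy, the Grundy value of a position is the XOR of the Grundy values of its segments, and a segment's value depends only on its length $m$ and, up to the $R\leftrightarrow B$ symmetry, on its boundary type: $o_m$ (open at both ends), $a_m$ (flanked by a coloured vertex on exactly one side), $b_m$ (flanked on both sides by the same colour), $c_m$ (flanked on both sides by different colours). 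The starting position on $P_n$ is the single segment $o_n$, so the claim is $o_n=1$ for odd $n$.

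Next I would write down the move-recurrences. Colouring the constrained end of an $a$-, $b$- or $c$-segment, or colouring an open end, shortens the segment by one and may change its type; colouring an interior vertex $v_j$ with colour $R$ or $B$ splits the segment at $v_j$ into a left and a right segment whose types are dictated by the chosen colour and the old flanking colours, contributing the XOR of their values. Carrying out this (finite) case analysis, with the cases $m\le 1$ checked by hand and the conventions $a_0=b_0=c_0=0$, yields for all $m\ge 2$ expressions such as
\[ b_m=\Mex\!\big(\{c_{m-1}\}\cup\textstyle\bigcup_{j=2}^{m-1}\{c_{j-1}\oplus c_{m-j},\,b_{j-1}\oplus b_{m-j}\}\big), \]
and analogous ones for $a_m$, $c_m$, $o_m$. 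I would then prove, by a single simultaneous induction on $m$, that
\[ c_m=0 \ (m\ge 0), \qquad b_m=1 \ (m\ge 1), \qquad a_m=m \ (m\ge 0). \]
In the step, every option of a $c$-segment turns out to have value $1$, so $c_m=0$; every option of a $b$-segment has value $0$, so $b_m=1$; and the set of option-values of an $a$-segment works out to be exactly $\{0,1,\dots,m-1\}$ — the mixed splittings contribute values of the form $1\oplus t$ with $1\le t\le m-2$, all of which still lie in this range — so the Mex is $m$.

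Finally, feeding $a_i=i$ into the recurrence for the fully open segment gives
\[ o_m=\Mex\{\,p\oplus(m-1-p) \ :\ 0\le p\le m-1\,\}. \]
When $n$ is odd, $n-1$ is even, so $p\oplus(n-1-p)\equiv n-1\equiv 0\pmod 2$ for every $p$; thus $1$ is never an option-value, while $p=(n-1)/2$ gives $p\oplus p=0$, so $0$ is. Therefore $\Mex$ of the option set is $1$, i.e.\ $o_n=1$. (Colouring the central vertex of $P_n$ realises the value-$0$ option, which in particular re-proves that odd paths are $\N$-positions, consistent with Corollary~\ref{paths}.)

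I expect the main obstacle to be bookkeeping rather than ideas: correctly enumerating, for each of the four segment types, all legal moves and the types of the resulting sub-segments (keeping track of which endpoint colour is forced and when a vertex becomes dead), and then running the simultaneous induction on $(a_m,b_m,c_m)$ cleanly — in particular verifying that the extra values produced by splitting an $a$-segment never exceed $m-1$, which is exactly what pins $a_m$ down to $m$ and not something larger.
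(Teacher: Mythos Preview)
Your proof is correct, but it follows a genuinely different route from the paper's. The paper does not decompose into segments at all; instead it argues directly that the sum of the odd path $G_1=P_{2k+1}$ with a single-vertex game $G_2$ is a $\PP$-position, by giving an explicit mirroring strategy for the second player (answer a move on the centre by playing $G_2$, answer a move on $G_2$ by playing the centre, and otherwise reflect). Since $\G(G_2)=1$, this forces $\G(G_1)=1$.

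Your approach is more laborious but also more informative: the intermediate facts $c_m=0$, $b_m=1$, $a_m=m$ give the Grundy value of \emph{every} mid-game position on a path (as an XOR of segment values), not just the starting position, and the same machinery immediately yields $o_n=0$ for even $n$ as well. The paper's mirroring argument, by contrast, is shorter and works uniformly for all $k\ge 2$ without a separate case split. Your treatment of $k\ge 3$ is fine, but the one-line ``a game of fixed odd length has Grundy value $1$'' would benefit from the one-sentence justification that when every play sequence has the same length, an easy induction shows all positions at a given depth share the same Grundy value, namely the parity of the remaining moves.
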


	
\begin{proof}
	Let us consider a coloring game $(G1)$ on a path of odd length $2k+1$, added to a coloring game $(G2)$ on a path of length 1. We label the vertices of the path on $(G1)$ from $1$ to $2k+1$. The game sum $G1+G2$ is a losing game. Consider the following nearly-symmetric strategy for the second player:
	\begin{itemize}
		\item If the first player plays on node $k+1$, then color the node of G2
		\item If the first player plays on G2, color the node $k+1$
		\item If the first player plays on node $i$, play on $2(k+1)-i$ with the same color
	\end{itemize}
	
	Now we must ensure that the second player always has a move. It is clear that in the first case, player 2 can play. In the second case, node $k+1$ is not colored yet (because G2 was not yet colored). We can color node $k+1$ because if any adjacent node $i$ is colored ($k$ or $k+2$), the node $2(k+1)-i$ is colored in the same color, and $k+1$ can be colored. In the last case, the coloration of the two sides of the paths (from $1$ to $k$ and $2k+1$ to $k+2$) are similar due to strategy property, and then if the first players can play, the second player can maintain the similarity and play too.
\end{proof}
\begin{open}
Despite these results, there are simple families of graphs which remain hard to solve, such as caterpillars (i.e., trees in which all the vertices are within distance $1$ of a central path), or $k$-complete trees when $k$ is even. Moreover, deciding whether there exists an involution in a graph is a hard problem in general. 
\end{open}

\section{Oriented Colorings}
\label{orientedColoring}

	In the rest of the paper, our main idea is to consider variants of {\sc proper k-coloring} by changing the coloring rules according to existing coloring graph parameters. Our study will generally be focused on paths and cycles. The first variant we investigate is a game based on oriented colorings, as described in \cite{Sop}.


\begin{definition}
        \label{defOrientedColoring}
	{\sc Oriented $k$-coloring} is an impartial coloring ruleset where for each arc $(u,v)$ in digraph $G$, the vertices $u$ and $v$ are not painted with the same color, and there is no other arc, $(v',u')$ of $E$ where $u$ is painted the same as $u'$ and $v$ the same as $v'$.
\end{definition}

	The oriented coloring of graphs has never been considered in the context of combinatorial games. One of the difficulties of {\sc Oriented $k$-coloring} is that the orientation of the colors is decided during the game (more precisely, when two adjacent vertices are colored for the first time). Hence we also choose to consider the variant of this game with $k=2$ and where the orientation is fixed before the first move:
	
\begin{definition}
        \label{defOrientedBlueRedColoring}
	{\sc Oriented Blue-Red-coloring} is an impartial coloring ruleset where $k=2$ (we will refer to the colors as Blue and Red) and for each arc $(u,v)$ of digraph, $G$: if both $u$ and $v$ are colored, then $u$ must be Blue, and $v$ must be Red.
\end{definition}

	This choice considerably simplifies the game on paths and cycles since coloring a vertex $v$ in Blue (resp. Red) forbids playing on the predecessor (resp. successor) of $v$. Hence coloring a vertex from an uncolored path $P$ is an operation that cuts $P$ into two shorter paths, with their extremities possibly colored.
	\begin{figure}[!ht]
		\label{grundyoriente}
		\centering
			\includegraphics[width=60mm]{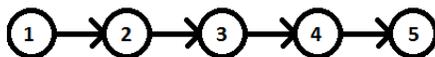}
		\caption{Example of oriented path of length 5}
	\end{figure}

	Thus, it is easy to remark that any game position of {\sc Oriented Blue-Red-coloring} on a path is a disjunctive sum of paths belonging to the four categories below:
\begin{itemize}
	\item The $A$ class : paths with the first vertex, $v_1$, colored Blue. Denote by $A_k$ a path of $A$ of length $k$.
	\item The $B$ class : paths with the last vertex, $v_k$, colored Red. Denote by $B_k$ a path of $B$ of length $k$.
	\item The $C$ class : paths with the first vertex, $v_1$, colored Blue, and the last, $v_k$, painted Red. Denote by $C_k$ a path of $C$ of length $k$.
	\item The $D$ class : paths with no vertex colored. Denote by $D_k$ a path of $D$ of length $k$.
\end{itemize}

We first give equivalences between the $A$ and $B$ classes, followed by a similar result concerning $C$ and $D$ classes.  In both cases, we let the vertices of a path of length $k$ be labeled from $v_1$ to $v_k$ : $v_1\rightarrow v_2 \rightarrow\ldots \rightarrow v_k$.

\begin{prop}
\label{PAB}
	For all integers $k>0: \G(A_k)=\G(B_k)$. 
\end{prop}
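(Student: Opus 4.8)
The plan is to exhibit an explicit isomorphism between the game trees rooted at $A_k$ and at $B_k$, after which equality of Grundy values is immediate from $\G(u)=\Mex(\G(O(u)))$.

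First I would record the structural symmetry of {\sc Oriented Blue-Red-coloring}: the ruleset is invariant under the operation $\sigma$ that simultaneously reverses every arc of the digraph and swaps the two colors (Blue $\leftrightarrow$ Red). Indeed, a position is legal exactly when every arc with both endpoints colored has a Blue tail and a Red head; reversing an arc $(u,v)$ into $(v,u)$ and swapping colors turns the former Red head $v$ into the new Blue tail and the former Blue tail $u$ into the new Red head, so the constraint is again satisfied, and conversely. Hence $\sigma$ sends legal positions to legal positions, preserves the ``colored/uncolored'' status of each vertex, and sends a legal move that colors a vertex $w$ with a color $c$ to the legal move that colors $\sigma(w)$ with the opposite color. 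In particular $\sigma$ restricts to a bijection $O(u)\to O(\sigma(u))$ for every position $u$.

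Next I would apply $\sigma$ to $A_k$. Writing $A_k$ as $v_1\rightarrow v_2\rightarrow\cdots\rightarrow v_k$ with $v_1$ colored Blue and all other vertices uncolored, the image $\sigma(A_k)$ is the path $v_k\rightarrow v_{k-1}\rightarrow\cdots\rightarrow v_1$ in which $v_1$, now the last vertex, is colored Red and all other vertices are uncolored. Read from its new source, this is precisely $B_k$, so $A_k$ and $B_k$ are isomorphic game positions. Since $\sigma$ commutes with taking options, a straightforward induction on the number of uncolored vertices gives $\G(\sigma(u))=\G(u)$ for every position $u$: the base case is a position with no legal move, and the inductive step is $\G(\sigma(u))=\Mex(\G(O(\sigma(u))))=\Mex(\G(\sigma(O(u))))=\Mex(\G(O(u)))=\G(u)$. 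Taking $u=A_k$ yields $\G(A_k)=\G(B_k)$.

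I do not expect a serious obstacle here; the only point that needs genuine care is checking that $\sigma$ preserves legality of positions in both directions and handles the pre-colored endpoint correctly, so that no spurious or missing options are introduced. Everything else is routine transport of a game isomorphism through the Grundy recurrence. (An alternative, should the clean symmetry proof be deemed insufficiently explicit, is a direct induction: show every option of $A_k$ has, after relabelling, the same type and length as a corresponding option of $B_k$, and invoke the induction hypothesis together with $\Mex$; but the involution argument subsumes this.)
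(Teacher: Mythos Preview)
Your proof is correct. The symmetry $\sigma$ you identify --- reverse every arc and swap Blue with Red --- is exactly the correspondence the paper uses, though the paper does not name it: it proceeds by induction on $k$, matching the option of $A_k$ that colors $v_i$ with color $c$ to the option of $B_k$ that colors $v_{k-i+1}$ with the opposite color, and then checks by hand that the resulting pairs of sub-paths have equal Grundy value (invoking the induction hypothesis in the Blue case, and getting literally identical $C$- and $D$-pieces in the Red case). Your version packages this as a single game-tree isomorphism and inducts on the number of uncolored vertices instead, which is cleaner, avoids the Blue/Red case split, and makes transparent why the same device yields the $C$/$D$ equivalence (and indeed $\G(u)=\G(\sigma(u))$ for \emph{any} position). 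The paper's explicit option-by-option matching, on the other hand, writes down the decompositions into the four path classes, which feeds directly into the recursive Mex formulas of the following proposition.
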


\begin{proof}
	Recursively, we prove that $\G(A_{k})=\G(B_{k})$ by showing that for any option of $A_{k}$, there exists an option of $B_{k}$ with the same Grundy value (note that every option on $B_{k}$ has a equivalent option in $A_{k}$ by a similar proof). \\
	Base Case: It is clear that $\G(A_{1})=\G(B_{1}) = 0$; there are no moves available.\\
	Suppose that for each $1\leq i<k$, we have $\G(A_{i})=\G(B_{i})$. We will now show that $\G(A_{k})=\G(B_{k}$).\\
	An option of $A_{k}$ is given by coloring the vertex $v_i$ ($2\leq i\leq k$) in red or blue. We prove that coloring $v_i$ is equivalent to coloring  $v_{k-i+1}$ on $B_{k}$ with the opposite color:
	\begin{itemize}
		\item If $v_i$ is colored Blue, then $A_{k}$ will be splitted into two shorter paths $A_{i-2}$ and $A_{k-(i-1)}$. Painting $v_{k-i+1}$ Red on $B_{k}$ splits into $B_{k-i+1}$ and $B_{i-2}$. Hence we conclude the hypothesis.
		\item If $v_i$ is colored Red, then $A_{k}$ will be splitted into two shorter paths $C_{i}$ and $D_{k-(i+2)}$. Painting $v_{k-i+1}$ Blue on $B_{k}$ splits into $D_{k-i-2}$ and $C_{i}$. 
	\end{itemize}

\end{proof}

\begin{prop}
\label{PCD}
	For all integers $k>0: \G(C_{k+3})=\G(D_{k})$.
\end{prop}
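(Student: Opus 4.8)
The plan is a strong induction on $k$: I will compute the set of Grundy values reachable in one move from $C_{k+3}$ and from $D_k$, show the two sets coincide, and conclude by the $\Mex$ rule. Write $a_m := \G(A_m) = \G(B_m)$ (the equality being Proposition~\ref{PAB}), $c_m := \G(C_m)$ and $d_m := \G(D_m)$, with the conventions that the empty path has Grundy value $0$ and that $a_1 = c_2 = c_3 = 0$ and $d_m = 0$ for all $m \le 0$; each of these equalities is immediate, since none of the listed positions admits a legal move. The structural fact I will use repeatedly is that coloring a vertex Blue forbids its predecessor and coloring a vertex Red forbids its successor, and that such a forbidden interior vertex can simply be deleted: it can never be colored afterwards (either color would clash, across the incident arc, with the already-colored neighbor), and while uncolored it constrains nothing. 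Hence each move splits a path into a disjunctive sum of two shorter paths of types $A,B,C,D$, whose Grundy value is the nim-sum of the two parts.

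Enumerating the options of $D_k = v_1\to\cdots\to v_k$ (all uncolored): coloring $v_i$ Blue, legal for every $i$, produces $D_{i-2}+A_{k-i+1}$, and coloring $v_i$ Red, legal for every $i$, produces $B_i + D_{k-i-1}$, so
\[ \G(D_k) = \Mex\Big(\{\,d_{i-2}\oplus a_{k-i+1}: 1\le i\le k\,\}\cup\{\,a_i\oplus d_{k-i-1}: 1\le i\le k\,\}\Big). \]
For $C_{k+3} = w_1\to\cdots\to w_{k+3}$ with $w_1$ Blue and $w_{k+3}$ Red, one first checks the boundary behaviour: $w_2$ can only be colored Red and $w_{k+2}$ only Blue (the other color would clash with the fixed endpoint), while each interior $w_j$ with $3\le j\le k+1$ admits either color. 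Coloring $w_j$ Blue ($3\le j\le k+2$) produces $A_{j-2}+C_{k-j+4}$, and coloring $w_j$ Red ($2\le j\le k+1$) produces $C_j + B_{k-j+2}$, so
\[ \G(C_{k+3}) = \Mex\Big(\{\,a_{j-2}\oplus c_{k-j+4}: 3\le j\le k+2\,\}\cup\{\,c_j\oplus a_{k-j+2}: 2\le j\le k+1\,\}\Big). \]

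To close the induction I would apply the hypothesis in the form $c_m = d_{m-3}$ for every $m$ with $2\le m\le k+2$; the cases $m=2,3$ are the trivial base values $c_2=c_3=0=d_{-1}=d_0$, and the remaining ones are $\G(C_m)=\G(D_{m-3})$ with $m-3<k$. Substituting $c_j = d_{j-3}$ in $\{\,c_j\oplus a_{k-j+2}: 2\le j\le k+1\,\}$ and reindexing by $i = j-1$ gives exactly $\{\,d_{i-2}\oplus a_{k-i+1}: 1\le i\le k\,\}$; substituting $c_{k-j+4}=d_{k-j+1}$ in $\{\,a_{j-2}\oplus c_{k-j+4}: 3\le j\le k+2\,\}$ and reindexing by $i = j-2$ gives exactly $\{\,a_i\oplus d_{k-i-1}: 1\le i\le k\,\}$. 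The two multisets governing the $\Mex$ for $\G(C_{k+3})$ are therefore identical to those governing the $\Mex$ for $\G(D_k)$, whence $\G(C_{k+3}) = \G(D_k)$; the base cases $k=1,2$ need no separate argument, as the computation then invokes only the trivial base values.

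I expect the main obstacle to be the careful bookkeeping in the two option-enumerations: getting the legality conditions right at the colored ends of $C_{k+3}$ (that $w_2$ is forced to Red and $w_{k+2}$ to Blue), verifying that a forbidden interior vertex really can be deleted with no side effect, and keeping the degenerate short paths $A_1$, $B_1$, $C_2$, $C_3$ and the empty path consistent with the index conventions so that the reindexings remain valid at their boundary terms. Proposition~\ref{PAB} is used twice and essentially — to rewrite every $\G(B_m)$ occurring in the option lists as $a_m$, and to make the final identification of the two $\Mex$-sets go through, since without it one set would carry $A$-values and the other $B$-values.
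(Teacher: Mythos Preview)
Your proof is correct and follows the same inductive strategy as the paper's sketch: enumerate the options of $C_{k+3}$ and of $D_k$, apply the induction hypothesis together with Proposition~\ref{PAB}, and conclude that the two $\Mex$ sets coincide. Your explicit reindexing (which in effect pairs a Blue move in one game with a Red move in the other) is a cleaner and more careful realization of the correspondence than the paper's terse statement.
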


\begin{proof}
	We use the same technique as in the proof of Proposition \ref{PAB}.\invisibleComment{This was \ref{ACD}, but I think it should be PAB instead, is that correct?}\\
	Base Case ($k=1$): $\G(C_4) = \G(D_1) = 1$, because all options of either game have no further moves.\\
	By induction, we show that for any option of $C_{k+3}$, there exists an option of $D_{k}$ with the same Grundy value: playing Blue (resp. Red) on $v_i$ from $C_{k+3}$ is equivalent to playing Blue (resp. Red) on $v_{i-2}$ (resp. $v_{k-i+2}$) from $D_{k}$.
	Reciprocally, playing Blue (resp. Red) on $v_i$ from $D_{k}$ is equivalent to playing Blue (resp. Red) on $v_{i+2}$ (resp. $v_{k-i+2}$) from $C_{k+3}$.
\end{proof}

	In order to compute the Grundy function for the {\sc Oriented Blue-Red-coloring game} on uncolored paths (i.e., paths of the $D$ class), we remark that any move from a position in $A,B,C$ or $D$ leads to a sum of two paths also belonging to $A,B,C$ or $D$. This leads to the following recursive characterization:

\begin{prop}
	\label{ACD}

  
\begin{eqnarray*}
\G(D_{k})&=& \textit{Mex}\lbrace \\
	(1)& &\lbrace\G(D_{i-2})\oplus\G(A_{k+1-i}), \forall\; 1\leq i\leq k\rbrace \cup\\
	(2)& &\lbrace\G(B_{i})\oplus\G(D_{k-(i+1)}), \forall\; 1\leq i\leq k\rbrace \rbrace
\end{eqnarray*}

\begin{eqnarray*}
	\G(A_{k})&=&\textit{Mex}\lbrace \\
	(1)& &\{\G(A_{i-2})\oplus\G(A_{k+1-i}), \forall\; 3\leq i\leq k\} \cup\\
	(2)& &\{\G(C_{i})\oplus\G(D_{k-(i+1)}), \forall\; 2\leq i\leq k\}\rbrace\\
\end{eqnarray*}

\begin{eqnarray*}
	\G(C_{k})&=&\textit{Mex}\lbrace \\
	(1)& &\{\G(A_{i-2})\oplus\G(C_{k+1-i}), \forall\; 3\leq i\leq k-1\} \cup\\
	(2)& &\{\G(C_{i})\oplus\G(B_{k-(i+1)}), \forall\; 2\leq i\leq k-2\}\rbrace
\end{eqnarray*}

	
%
\end{prop}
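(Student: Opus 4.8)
The plan is to prove the three recursive identities for $\G(D_k)$, $\G(A_k)$ and $\G(C_k)$ simultaneously by analyzing, for each of these four path types, exactly what legal moves are available and into which disjunctive sum of path types each move decomposes the position. The central observation, already noted in the paragraph preceding the statement, is that a single move on an uncolored vertex $v_i$ of a path $P$ (any of classes $A,B,C,D$) cuts $P$ into two shorter paths: if $v_i$ is colored Blue, then its predecessor $v_{i-1}$ becomes forbidden (since the arc $v_{i-1}\to v_i$ would need $v_{i-1}$ Blue and $v_i$ Red, impossible), so $v_{i-1}$ is effectively deleted and we are left with the segment $v_1,\dots,v_{i-2}$ (with whatever left-endpoint coloring $P$ had) together with the segment $v_i,\dots,v_k$ whose first vertex is now Blue; symmetrically for Red, the successor $v_{i+1}$ is killed and the left segment ends in Red. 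Then $\G(P)=\Mex$ of the $\oplus$-values of all these options, by the Sprague--Grundy theorem quoted earlier.

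First I would fix notation: for a path of length $k$ with vertices $v_1\to v_2\to\cdots\to v_k$, enumerate the legal first moves. For $D_k$ (no endpoints colored): playing Blue on $v_i$ ($1\le i\le k$) deletes $v_{i-1}$ and yields $D_{i-2}$ (empty or a $D$-path on $v_1,\dots,v_{i-2}$) plus an $A$-path on $v_i,\dots,v_k$ of length $k-i+1$; playing Red on $v_i$ deletes $v_{i+1}$ and yields a $B$-path on $v_1,\dots,v_i$ of length $i$ plus $D_{k-i-1}$ on $v_{i+2},\dots,v_k$. Re-indexing the Red moves via $i\mapsto i$ one sees the two families are exactly clauses (1) and (2) in the $D_k$ display (with the convention $\G(D_0)=\G(D_{-1})=0$, i.e.\ an empty or single-vertex $D$-path has Grundy value $0$, which must be stated explicitly). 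For $A_k$ the first vertex $v_1$ is already Blue, so $v_1$ cannot be recolored and in fact is already ``used up'' in the sense that no move touches it; a Blue move on $v_i$ requires $i\ge 3$ (since $v_2$ would be killed, but more importantly $v_1$ Blue already forces... actually the constraint is simply that we need $i-1\ge 1$ to have something to delete and $v_{i-1}\neq v_1$ only matters for counting) and yields $A_{i-2}\oplus A_{k-i+1}$; a Red move on $v_i$ requires $i\ge 2$ and yields $C_i\oplus D_{k-i-1}$, matching clauses (1) and (2) of the $A_k$ display. For $C_k$ both endpoints are colored ($v_1$ Blue, $v_k$ Red), which additionally forbids the moves closest to the ends, explaining why the index ranges shrink to $3\le i\le k-1$ and $2\le i\le k-2$ and why one gets $A_{i-2}\oplus C_{k-i+1}$ from Blue moves and $C_i\oplus B_{k-i-1}$ from Red moves.

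The main steps, in order, are: (i) state the boundary conventions $\G(A_1)=\G(B_1)=0$, $\G(D_0)=\G(D_1)=0$, and note $C_k$ is only defined for $k\ge 4$ (shorter $C$-paths are illegal because an arc would be monochromatic-violating or force an impossible orientation); (ii) carefully justify, for a generic vertex $v_i$ in each of the four classes, that coloring it Blue kills exactly $v_{i-1}$ and coloring it Red kills exactly $v_{i+1}$, and identify the class of each resulting segment from the state of its (possibly pre-colored) endpoints; (iii) collect these options into the four stated $\Mex$-sets and invoke $\G(u)=\Mex(\G(O(u)))$. I expect the main obstacle to be the bookkeeping of index ranges and off-by-one/off-by-two shifts at the ends of the paths — in particular verifying that in $A_k$, $C_k$ the near-endpoint vertices genuinely have no legal move (e.g.\ in $C_k$, recoloring $v_2$ Red would be fine but would kill $v_3$... one must check the interaction with the already-Blue $v_1$) and that the degenerate segments $D_{-1}, D_0, A_1, B_1, C_2, C_3$ are handled by the stated conventions rather than appearing spuriously in the $\Mex$-sets. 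A secondary subtlety is confirming that a move never produces a path outside the four classes $A,B,C,D$ — this follows because the only way to pre-color an interior endpoint is to have just played Blue or Red there, which always lands in one of these four configurations, but it is worth stating once and for all.
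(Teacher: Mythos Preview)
Your proposal is correct and follows exactly the same approach as the paper: enumerate, for each class, the Blue and Red moves on each interior vertex, observe that Blue kills the predecessor and Red kills the successor, and read off the classes of the two resulting segments. The paper's proof is in fact a one-line sketch (``Visually. \ldots\ In each Mex formula, the first part (1) corresponds to coloring $v_i$ with Blue, and the second part (2) to coloring $v_i$ with Red''), together with a figure; your write-up is the detailed case analysis that this sketch summarizes. One small correction: $C_2$ and $C_3$ are perfectly legal positions (with Grundy value $0$), not forbidden configurations --- $C_2$ is $v_1$ Blue, $v_2$ Red along the arc $v_1\to v_2$, which satisfies the constraint --- so your boundary conventions should simply set $\G(C_2)=\G(C_3)=0$ rather than declare them undefined.
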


\begin{proof}
	Visually. Example is given in Figure \ref{grundyoriente2}. In each Mex formula, the first part (1) corresponds to coloring $v_i$ with Blue, and the second part (2) corresponds to coloring $v_i$ with Red.
\end{proof}

	\begin{figure}[!ht]
		\centering
			\includegraphics[width=75mm]{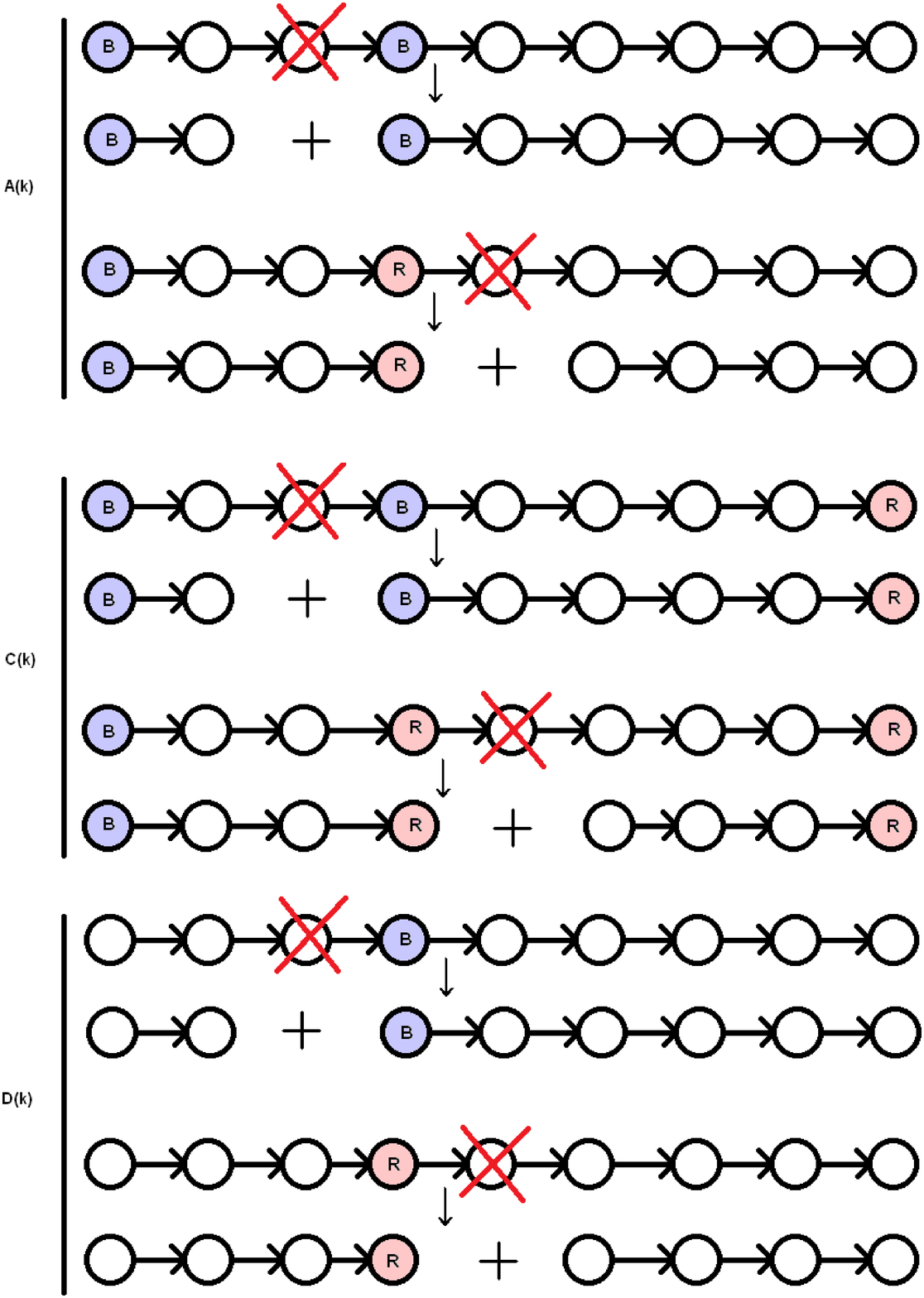}
		\caption{Proof of Proposition \ref{ACD}}
		\label{grundyoriente2}
	\end{figure}
	

	Unfortunately, the previous proposition does not provide a polynomial time algorithm to compute the value of $\G(D_k)$. Even the zeros of the Grundy function seem hard to characterize. By computing the first values of $\G$, we note that they split into rare and common values. This phenomenon, which is described in \cite{Win}, mainly appears for octal games and is frequently used to provide a quick computation of their nimbers. It appears in our study since the recursive characterization described in Proposition \ref{ACD} has the same structure as those of octal games.
	
The interest of such a partition of the nimbers is that the rare values form a closed space under nim-addition. The sum of two common values makes a rare value, while adding a rare and a common value gives a common value. In our situation, we observed that the sparse space is the closure (under the Nim-sum operator) of the set $\{0,1,2,3,4,5,6,7,24,40,64,136,264,520,1032\}$. By considering this property and together with Propositions \ref{PAB} and \ref{PCD}, we considerably improved the speed of the algorithm described in Poposition \ref{ACD}. In concrete terms, we computed the Grundy function of positions in $A,B,C$ and $D$ for paths of lengths $1$ up to $10 000 000$. Unfortunately, no periodicity appears in the Grundy function. However, we note the following facts:
\begin{itemize}
	\item $\forall k>3$, we have $\G(A_k)>0$ and $\G(B_k)>0$.
	\item We know $26$ $\PP$ positions for $D_k$, the last one being at $k=8084$.
	\item The maximal found value for $\G(D_k)$ is $\G(3099736)=1401$.
	\item The largest known index for a rare value is $k=642196$.
	\item There exist odd-length paths which are losing positions for the first player ($\PP$).
	\item $\G(D_k)$ is not periodic for $k<$10 000 000.
\end{itemize}
	
Although the Grundy function is not fully computed for the classes $A$ and $B$, we can prove that games played on these classes always are $\N$ positions:

\begin{prop}
	$\forall k>3: \G(A_k)>0$ and $\G(B_k)>0$.
\end{prop}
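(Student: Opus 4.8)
The plan is to prove $\G(A_k)>0$ directly, by exhibiting for each $k>3$ a single first move out of $A_k$ that lands on a $\PP$-position; by the Sprague--Grundy correspondence recalled in the introduction this forces $\G(A_k)>0$, and then $\G(B_k)=\G(A_k)>0$ is immediate from Proposition \ref{PAB}. So the whole task reduces to inspecting the options of $A_k$ (as decomposed in Proposition \ref{ACD}) and finding one whose two summands have equal Grundy value, since a game added to a copy of itself, or two games of equal nim-value, is always a second-player win.

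I would split on the parity of $k$. If $k$ is odd, I would colour the central vertex $v_{(k+3)/2}$ Blue: this is legal because in the position $A_k$ only $v_1$ is coloured, so the predecessor $v_{(k+1)/2}$ is uncoloured, and $3\le (k+3)/2\le k$ for $k\ge 3$. By the Blue-move splitting rule this option is $A_{(k-1)/2}+A_{(k-1)/2}$, a $\PP$-position. If $k$ is even, I would instead colour $v_{(k+2)/2}$ Red, which is legal for every $k\ge 4$; the Red-move rule gives the option $C_{(k+2)/2}+D_{(k-4)/2}$. Since $(k+2)/2=(k-4)/2+3$, Proposition \ref{PCD} yields $\G(C_{(k+2)/2})=\G(D_{(k-4)/2})$, so the two summands cancel under nim-addition and the option is a $\PP$-position. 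The one boundary value is $k=4$, where $(k+2)/2=3$ lies outside the range of Proposition \ref{PCD}; there I would just observe by hand that $C_3$ admits no legal move (its single interior vertex is pinned to two incompatible colours by its in-arc and out-arc), so $C_3+D_0$ is already terminal, hence $\PP$.

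Concretely the steps are: (i) restate the move decomposition of $A_k$ (Blue on $v_j$ $\mapsto A_{j-2}+A_{k-j+1}$, Red on $v_i$ $\mapsto C_i+D_{k-i-1}$) together with the legality conditions in the starting position; (ii) settle odd $k$ with the self-paired Blue move; (iii) settle even $k\ge 6$ with the Red move plus Proposition \ref{PCD}; (iv) dispatch $k=4$ directly. I do not anticipate a substantive obstacle: the whole argument is choosing the right vertex. The only place that needs care is the bookkeeping---verifying that the chosen vertex (and, for the Blue move, its predecessor) is genuinely uncoloured in $A_k$, that the index arithmetic keeps each summand in its admissible range, and in particular that Proposition \ref{PCD} is never invoked outside its stated hypothesis $m>0$, which is exactly what makes $k=4$ a separate check.
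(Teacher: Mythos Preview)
Your proof is correct and follows essentially the same approach as the paper: for odd $k$ you play Blue on $v_{(k+3)/2}$ to get $A_{(k-1)/2}+A_{(k-1)/2}$, and for even $k$ you play Red on $v_{k/2+1}$ to get $C_{k/2+1}+D_{k/2-2}$ and invoke Proposition~\ref{PCD}. Your version is in fact slightly more careful than the paper's: you correctly identify the even-$k$ move as Red (the paper says ``Blue'' but then writes the $C+D$ decomposition, which is the Red-move splitting), and you isolate the boundary case $k=4$, where Proposition~\ref{PCD} is stated only for positive index and so does not literally apply, and check $C_3+D_0=\PP$ by hand.
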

	
\begin{proof}
	Whatever the length of the path, it suffices to prove that there exists an option with Grundy value 0.\\
	Assume $k$ is odd. Then play Blue on $v_{(k+3)/2}$. According to Proposition \ref{ACD}, the Grundy value of the resulting position is equal to $\G(A_{(k+3)/2-2})\oplus \G(A_{k+1-(k+3)/2})=\G(A_{(k-1)/2})\oplus \G(A_{(k-1)/2})=0$. \\
	Assume $k$ is even. Then play Blue on $v_{k/2+1}$. The Grundy value is thus $\G(C_{k/2+1})\oplus \G(D_{k-(k/2+1+1)})=\G(C_{k/2+1})\oplus \G(D_{k/2-2})$. Since we know from Proposition \ref{PCD} that $\G(C_{k/2+1})=\G(D_{k/2-2})$, the Grundy value of this option is 0.
\end{proof}

\begin{corollary}
	Given an integer $l>3$, {\sc Oriented Blue-Red-coloring} starting positions on a cycle of length $l$ is in $\PP$.
\end{corollary}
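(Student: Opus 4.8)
The plan is to show that, on the directed cycle, every legal first move produces a position that has already been analysed among the path classes, and that all of those positions are $\N$-positions. Write the cycle as $v_1\to v_2\to\cdots\to v_l\to v_1$. Since $v_i\mapsto v_{i+1}$ (indices mod $l$) is a digraph automorphism, the cycle is vertex-transitive, so up to relabelling the first player colours $v_1$, and the only genuine choice is the colour. If $v_1$ is coloured Blue, the arc $(v_l,v_1)$ can never have both endpoints coloured, because that would force $v_l$ Blue and $v_1$ Red; hence $v_l$ is dead for the rest of the game, and deleting it leaves precisely the path $v_1\to\cdots\to v_{l-1}$ with first vertex Blue, i.e.\ the position $A_{l-1}$. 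Symmetrically, colouring $v_1$ Red kills the successor $v_2$ and leaves $B_{l-1}$. Thus $O(C_l)=\{A_{l-1},B_{l-1}\}$, and by Proposition~\ref{PAB} these two options share a common Grundy value $g:=\G(A_{l-1})=\G(B_{l-1})$.

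Next I would verify $g>0$. Since $l>3$ we have $l-1\geq 3$. When $l-1>3$ this is immediate from the preceding proposition, which asserts $\G(A_k)>0$ for every $k>3$. The one remaining case is $l=4$, where $l-1=3$; here a direct computation suffices: each of the options of $A_3$ --- colour the middle vertex, or colour the last vertex Red, or colour it Blue --- leads to a position with no further legal move, so $\G(A_3)=\Mex\{0,0,0\}=1>0$, and $\G(B_3)=1$ by Proposition~\ref{PAB}.

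To conclude, the Sprague--Grundy Mex identity gives $\G(C_l)=\Mex\{\G(A_{l-1}),\G(B_{l-1})\}=\Mex\{g\}=0$, since $g>0$; hence the starting position of {\sc Oriented Blue-Red-coloring} on a cycle of length $l>3$ is a $\PP$-position, which is the claim.

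The only step that needs care --- and essentially the only place this argument could slip --- is the reduction in the first paragraph: one must check that, once $v_l$ (resp.\ $v_2$) becomes permanently uncolourable, the arc joining it to the rest of the cycle imposes no constraint, which holds precisely because a dead vertex is never coloured, so the rule's hypothesis ``if both $u$ and $v$ are coloured'' is vacuous for that arc; and one must check that the surviving labelled path is genuinely a class-$A$ (resp.\ class-$B$) path of length $l-1$, so that Proposition~\ref{PAB} applies. Everything after that is a one-line consequence of the earlier propositions and the Mex characterization of $\G$.
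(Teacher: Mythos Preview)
Your proof is correct and follows essentially the same approach as the paper's: reduce the cycle after the first move to a path of class $A$ or $B$ and invoke the preceding proposition that these are $\N$-positions. Your version is in fact more careful than the paper's, since you explicitly handle the boundary case $l=4$ (where $l-1=3$ is not covered by the proposition as stated) and you spell out the reduction; note only that your use of $C_l$ for the cycle clashes with the paper's class-$C$ path notation, and that invoking Proposition~\ref{PAB} is unnecessary---it suffices that both options have positive Grundy value, whether or not they coincide.
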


\begin{proof}
	When first player plays Blue (resp. Red) on a node of the cycle, the predecessor (resp. successor) of the chain can not be painted. Then the chain is equivalent to an $A$ or $B$ case, which is always in $\N$. Therefore, the second player can win in all cases.
\end{proof}

\begin{open}
There are lot of questions that remain open for this kind of coloring:
\begin{itemize}
	\item Is $k=8084$ the last value for which $D_k$ is a P-position?
	\item Is there any periodicity of the Grundy function, with period > 10 000 000?
	\item Can we provide a polynomial time algorithm to compute the Grundy function?
	\item What if we consider that the orientation is not fixed before starting the game?
\end{itemize}
\end{open}

The computational complexity of this game mimics that of \textsc{Proper $k$-coloring}.  The proof is a corollary to Proposition \ref{properPSPACE}.

\begin{corollary}
  Determining whether a position of {\sc Oriented $k$-Coloring} is in $\N$ is \cclass{PSPACE}-complete.
\end{corollary}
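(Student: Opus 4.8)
The plan is to recycle the reduction from the proof of Proposition~\ref{properPSPACE} almost verbatim, simply orienting the constructed graph and then checking that the extra clause of Definition~\ref{defOrientedColoring} is never activated. Membership in \cclass{PSPACE} is immediate for the same reason as before: every instance is a short game lasting at most $|V|$ moves, so an alternating polynomial-space search determines the outcome.

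For hardness (for each fixed $k\ge 1$) I would again reduce from \textsc{Node-Kayles}. For $k=1$ there is nothing to do: with a single colour, an arc with both endpoints coloured already fails the ``distinct colours'' clause, so \textsc{Oriented $1$-coloring} on a digraph is exactly \textsc{Node-Kayles} on its underlying undirected graph. For $k\ge 2$, given a \textsc{Node-Kayles} instance $G=(V,E)$, form the graph $G'=(V',E')$ of Proposition~\ref{properPSPACE}, precolour each auxiliary vertex $v_i$ ($1\le i\le k-1$) with colour $i$, orient every gadget edge as the arc $(v_i,v_0)$, and orient each edge of $E$ arbitrarily, say $(u_0,w_0)$. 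The only uncoloured vertices are the $v_0$'s, and the arcs $(v_i,v_0)$ forbid colours $1,\dots,k-1$ on $v_0$, so colour $k$ is the unique candidate for every $v_0$, exactly as in the undirected case. Moreover colouring $w_0$ with $k$ is illegal precisely when some \textsc{Kayles}-neighbour $u_0$ is already coloured, because then the arc between $u_0$ and $w_0$ would carry the pair $(k,k)$; hence legal plays in $G'$ are in parity-preserving bijection with legal plays of \textsc{Node-Kayles} on $G$, and the two games share an outcome. As orienting edges preserves planarity, planarity of $G$ is preserved.

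The one point that needs care --- and the only possible obstacle --- is to make sure the second (anti-parallel) clause of Definition~\ref{defOrientedColoring} never renders a move of $G'$ illegal when the corresponding \textsc{Node-Kayles} move is legal. This reduces to a short inspection of the colour digraph of an arbitrary legal position: each auxiliary vertex $w_i$ has in-degree $0$ (its only incident arc, $(w_i,w_0)$, points away from it), and in any legal position no arc can have both endpoints coloured $k$ (that would already break the ``distinct colours'' clause); so the colour digraph contains only arcs $i\to k$ with $i<k$, and never an anti-parallel pair. With this verified, the reduction goes through and deciding the $\N$-outcome of \textsc{Oriented $k$-Coloring} is \cclass{PSPACE}-complete.
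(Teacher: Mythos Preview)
Your argument is correct and essentially identical to the paper's: both reuse the gadget from Proposition~\ref{properPSPACE}, orient the edges, and observe that every uncoloured $v_0$ is forced to use the single remaining colour so that play coincides with \textsc{Node-Kayles}. The only cosmetic differences are that the paper directs the gadget arcs the other way (from $v_0$ to $v_i$) and does not spell out the anti-parallel check you perform; your extra verification is welcome but not a genuinely different route.
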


This reduction is exactly the same as in Proposition \ref{properPSPACE} except that all edges are oriented.  

\begin{proof}
  Again, we reduce from {\sc Node-Kayles}.  Given a game of {\sc Node-Kayles} with undirected graph $G = (V, E)$, we construct a new directed graph, $G' = (V', E')$ as in the proof of Proposition \ref{properPSPACE} except that $E'$ consists of \emph{arcs}.  ($v_0$ has arcs leading to each $v_i$ in that reduction; each other edge can be directed in either direction.)
  In the game of {\sc Oriented $k$-Coloring} on $G'$, only one of the $k$ colors can be used by the players (since all vertices are adjacent to vertices of all other colors) and thus no two adjacent vertices can be colored, just as in the original {\sc Node-Kayles} game.  Thus the two games are equivalent and {\sc Oriented $k$-Coloring} is \cclass{PSPACE}-hard.  Since the game has a maximum number of moves, $\left|V'\right|$, the game is in \cclass{PSPACE}; together, deciding the outcome class is a \cclass{PSPACE}-complete problem.
\end{proof}

As it turns out, {\sc Oriented Blue-Red Coloring} is also a \cclass{PSPACE}-complete game.  

\begin{prop}
  Determining whether a position of {\sc Oriented Blue-Red Coloring} is in $\N$ is \cclass{PSPACE}-complete.
\end{prop}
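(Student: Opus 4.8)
The plan is to reduce from \textsc{Node-Kayles}, as in the two preceding complexity results, but with a new gadget: the ``colored out\nobreakdash-neighbour'' trick used for {\sc Oriented $k$-Coloring} merely blocks \emph{one} of the $k$ colors at a neighbour, which is useless when $k=2$, so I would instead exploit the Blue/Red asymmetry of the ruleset. Membership in \cclass{PSPACE} is the easy direction: each move colors a previously uncolored vertex, so every play has length at most $|V|$, and the usual bounded-depth alternating minimax search puts outcome determination in \cclass{PSPACE}.

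For hardness, given a \textsc{Node-Kayles} instance $G=(V,E)$, I would build a digraph $G'$ as follows. Keep an uncolored vertex $t_v$ for each $v\in V$; for each edge $\{u,v\}\in E$ add a single arc between $t_u$ and $t_v$, oriented arbitrarily (since $G$ is simple this yields no $2$-cycle, so $G'$ is even an oriented graph). Finally, for each $v\in V$ attach a fresh pendant vertex $r_v$, pre-colored Red, together with the arc $(t_v,r_v)$. This construction is polynomial, and since the $r_v$ are pendant it preserves the planarity of $G$.

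The correspondence to verify, by induction on the number of moves, rests on three points. First, no $t_v$ can ever be colored Red, since its already-colored out-neighbour $r_v$ forbids it; hence every legal move colors some $t_v$ Blue. Second, $t_v$ is colorable (necessarily Blue) exactly when no trigger $t_w$ with $w$ adjacent to $v$ in $G$ is colored: a colored in-neighbour blocks Blue directly, and a colored out-neighbour $t_w$ is itself Blue by the first point, which also blocks Blue -- so a colored trigger-neighbour always blocks $t_v$, whichever way the incident edge of $G$ was oriented. Third, once $t_v$ is colored every neighbouring $t_u$ is dead and stays dead (Blue blocked by the colored neighbour $t_v$, Red blocked by $r_u$), and the $r_v$ contribute no moves, so no spurious positions appear. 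Putting these together, coloring $t_v$ in $G'$ is precisely the \textsc{Node-Kayles} move ``play $v$'': legal iff $v$ is still available, and it removes $v$ together with all its neighbours. The two games are therefore isomorphic as short impartial games, in particular share outcome classes, so deciding the outcome of {\sc Oriented Blue-Red Coloring} is \cclass{PSPACE}-hard, hence \cclass{PSPACE}-complete.

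I expect the only delicate point to be the second one above -- checking that an available $t_v$ is genuinely blocked by any colored trigger-neighbour \emph{irrespective} of how the edges of $G$ were oriented -- which is exactly where the ``all colored triggers are Blue'' observation does the work. Everything else is routine bookkeeping, but it is worth stating explicitly that the pre-colored pendants $r_v$ are pendant and already colored, so that what one obtains is a genuine game isomorphism and not merely an outcome-preserving map.
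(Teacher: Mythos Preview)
Your reduction is correct, but it is not the one the paper uses. The paper's argument is shorter: given the \textsc{Node-Kayles} graph $G=(V,E)$, it keeps $V'=V$ and replaces each undirected edge $\{u,v\}$ by the \emph{pair} of antiparallel arcs $(u,v)$ and $(v,u)$, with no pre-colored vertices at all. With both arcs present, coloring one endpoint in either color makes the other endpoint uncolorable, so the equivalence with \textsc{Node-Kayles} is immediate.

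What your approach buys is that your $G'$ is an \emph{oriented} graph in the strict sense (no $2$-cycles), whereas the paper's digraph has a $2$-cycle on every original edge; hence your argument actually proves a slightly sharper statement, namely \cclass{PSPACE}-hardness already for simple oriented graphs, and as you note it preserves planarity. The cost is the extra gadgetry (the Red pendants $r_v$) and the two-step analysis that first pins every trigger to Blue and then checks both arc orientations. Your ``delicate point'' is handled correctly: a Blue in-neighbour forces $t_v$ to be Red along that arc, and a Blue out-neighbour forbids $t_v$ Blue since the head would have to be Red; together with the pendant ruling out Red, $t_v$ is dead in either case.
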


This proof is based off the observation that if there are two vertices connected by arcs in both directions, only one of the pair can be colored.

\begin{proof}
  As with other proofs, we will reduce from {\sc Node-Kayles}.  Given a game of {\sc Node-Kayles} with undirected graph, $G = (V, E)$, we construct a new \emph{directed} graph, $G' = (V', E')$ where:

  $V' = V$ and
  
  $E' = (u, v), (v, u) : (u,v) \in E$.

  Since each pair of adjacent vertices is connected by arcs in both directions in $G'$, if a vertex is painted either Red or Blue, the adjacent vertices may not be colored at all.  Thus, each move on $G'$ is equivalent to the corresponding move on $G$ (ignoring the color).  Thus, the position of {\sc Oriented Blue-Red Coloring} on $G'$ is equivalent to the {\sc Node-Kayles} game on $G$ and the game is \cclass{PSPACE}-hard.  Since the maximum number of moves is $\left\vert V'\right \vert$, the solution can be computed in \cclass{PSPACE} and deciding the outcome class is a \cclass{PSPACE}-complete problem.
\end{proof}

\section{Weak Colorings}
\label{weakColoring}

	We now consider a coloring game on a graph $G$ with the consraint that each game position is a partial weak coloring of $G$. A weak coloring of a graph $G$ is a vertex coloring such that each non-isolated vertex is adjacent to at least one vertex with different color \cite{Chew}.

\begin{definition}
        \label{defWeakColoring}
	{\sc Weak 2-coloring} is an impartial coloring ruleset where two adjacent vertices can be colored with the same color if and only if each vertex is also adjacent to a vertex of the opposite color.
\end{definition}

	In the field of chromatic graph theory, weak coloring is completely solved, in particular since each graph is 2-weakly colorable. \\
	
	We here study {\sc Weak 2-coloring} on paths and cycles. In the case of paths and cycles of even length, we can use a generalization of Propositions \ref{gagnant} and \ref{perdant} to conclude that these instances of {\sc Weak 2-coloring} are all $\PP$ positions, while odd paths are $\N$ positions.

Alternative consideration must be applied in the case of odd cycles, which differs from the results for {\sc Proper 2-coloring}.
	
\begin{theorem}\label{weak}
	{\sc Weak 2-coloring} starting positions on odd cycles are in $\N$.
\end{theorem}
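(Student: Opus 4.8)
The plan is for the first player to make a single good opening move and then shadow the opponent with a pairing strategy for the remainder of the game. Label the cycle cyclically as $v_{0},v_{1},\dots ,v_{2k}$ and have the first player colour $v_{0}$ Red. Partition the other $2k$ vertices into $k$ pairs: the pair $\{v_{1},v_{2k}\}$ formed by the two neighbours of $v_{0}$, together with the $k-1$ \emph{adjacent} pairs $\{v_{2},v_{3}\},\{v_{4},v_{5}\},\dots ,\{v_{2k-2},v_{2k-1}\}$, which tile the sub-path $v_{2}-v_{3}-\dots -v_{2k-1}$. The first player's rule is then: whenever the opponent colours a vertex of one of these pairs, reply by colouring the \emph{other} vertex of the same pair --- using the opposite colour when the pair is adjacent, and using Blue when the pair is $\{v_{1},v_{2k}\}$.

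I would show by induction that, at the start of every turn of the opponent, $v_{0}$ is Red and each pair is either entirely uncoloured or entirely coloured, with every coloured adjacent pair containing one vertex of each colour. Granting this, the theorem follows by parity: the opponent can only ever colour a vertex of a presently-uncoloured pair, so the prescribed reply is always a move available to the first player; since the game is short, it must therefore end on a move of the first player, leaving the opponent unable to move. Hence the starting position is in $\N$.

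The real content --- and the step I expect to be the main obstacle --- is verifying that each prescribed reply is legal, i.e.\ that it leaves a partial weak colouring. When the reply lies in an adjacent pair, the newly coloured vertex is adjacent to its partner, which was just painted the opposite colour, so the new vertex automatically sees the other colour; its remaining (external) neighbour either lies in an already-completed adjacent pair, hence sees the opposite colour through its own partner, or is one of $v_{1},v_{2k}$. Here I would run a short case analysis using the weak-colouring rule --- that two equally coloured adjacent vertices are allowed exactly when each of them sees a vertex of the opposite colour --- and the observation that whenever a ``seam'' vertex such as $v_{1}$ is coloured Red it must already have a Blue neighbour (indeed the first player never paints $v_{1}$ or $v_{2k}$ Red, and the opponent cannot legally do so while $v_{2}$, resp.\ $v_{2k-1}$, is uncoloured), to conclude that no previously satisfied vertex is ever spoiled. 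When the reply lies in the pair $\{v_{1},v_{2k}\}$, the new vertex is a neighbour of $v_{0}=R$, so painting it Blue is always legal and, as a bonus, gives $v_{0}$ a neighbour of the opposite colour. This is exactly the feature that separates \textsc{Weak 2-coloring} from \textsc{Proper 2-coloring}: a weak colouring tolerates two adjacent vertices of the same colour as long as each still sees the other colour, and this slack is precisely what lets the first player ``absorb'' the surplus vertex $v_{0}$ and flip odd cycles from $\PP$ (Proposition~\ref{perdant}) to $\N$. To finish I would also check the degenerate case $k=1$, where $\{v_{1},v_{2k}\}$ is itself an edge of the triangle, and note that the opening move $v_{0}=R$ is legal on the empty cycle.
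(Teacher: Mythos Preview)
Your proof is correct but takes a genuinely different route from the paper's. The paper does not construct a strategy at all: it argues, by contradiction, that at the end of \emph{any} play on an odd cycle every vertex must be coloured. Assuming some vertex $v$ stays uncoloured, it propagates constraints around the cycle (the neighbours of $v$ must carry opposite colours, their outer neighbours must be uncoloured, and so on) until the two chains meet and contradict each other. Hence the game always lasts exactly $2k+1$ moves, and parity alone puts the starting position in $\N$. This is strictly stronger than what you prove: it immediately gives the Corollary that the Grundy value is the parity of the number of uncoloured vertices, i.e.\ that strategy is irrelevant.

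Your argument, by contrast, is constructive: a single opening move followed by a pairing strategy. It is more hands-on and perhaps more natural if one only wants $\N$, but it does not by itself yield the ``game is trivial'' corollary, since you only exhibit one winning line rather than show that every line ends with all vertices coloured. One small remark on the write-up: your parenthetical reason why the opponent cannot paint $v_{1}$ Red appeals to $v_{2}$ being uncoloured, but the cleaner (and complete) justification is the pairing invariant itself --- at the opponent's turn $v_{2k}$ is uncoloured because it is $v_{1}$'s pair-mate, so $v_{0}$ has no Blue neighbour and $v_{1}$ cannot legally be made Red regardless of the state of $v_{2}$. With that tweak the case analysis goes through exactly as you outline.
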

	
\begin{proof}

For a positive integer $k$, we consider a cycle of length $2k+1=n$. By way of contradiction, suppose there is a winning strategy for the second player. As both players play alternately, there must be at least one vertex, say $v$, which is not colored at the end of game. Denote by $l_0$ and $r_0$ the two adjacent vertices of $v$, and by $l_1,r_1$ the other adjacent vertices to $l_0$ and $r_0$ respectively. By the same way, we denote by $l_2\ldots l_k$ (resp. $r_2\ldots r_k$) the other adjacent vertices of $l_1\ldots l_{k-1}$ (resp. $r_1\ldots r_{k-1}$). We first remark that at the end of the game, $l_{0}$ and $r_{0}$ hold necessarily different colors. If not, $v$ could have been colored, contradicting the hypothesis.
	If $l_1$ (resp. $r_1$) has the opposite color of $l_0$ (resp. $r_0$), we are able to color $v$ in the same color as $l_0$ (resp. $r_0$) contradicting the hypothesis. In addition, since $v$ is not colored, $l_0$ and $r_0$ can not be adjacent to a vertex of the same color. Therefore, $l_{1}$ and $r_{1}$ are uncolored at the end. 
Moreover, we have $l_{1}\neq r_{1}$ since the cycle has an odd length.
	By repeating these steps, we show the fact $(F)$ : the vertices $l_{2i+1}$ and $r_{2i+1}$ for $0\leq i < k/2$ are not colored, while the vertices $l_{2i}$ and $r_{2i}$ for $1\leq i < k/2$ have opposite colors. We now consider the final adjacent vertices $l_k$ and $r_k$:
	\begin{itemize}
		\item If $k$ is odd, there are two uncolored adjacent vertices $l_k$ and $r_k$ at the end of the game, which is not possible.
		\item If $k$ is even, there are two colored adjacent vertices $l_k$ and $r_k$ with different colors. Hence $l_{k-1}$ and $r_{k-1}$ can be colored, contradicting the fact $(F)$.
	\end{itemize}
	Consequently, there is no winning strategy for the second player.
\end{proof}
	
\begin{corollary}
	{\sc Weak 2-coloring} starting positions on odd cycles have grundy value $g$ where $g$ is the parity of the number of uncolored vertices.  Thus, the strategies of the players does not matter in the outcome of the game.
\end{corollary}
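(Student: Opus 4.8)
The plan is to strengthen the argument of Theorem~\ref{weak} by tracking not just \emph{that} the first player wins, but \emph{how many moves the game lasts}, regardless of play. The key claim is that on an odd cycle $C_n$, every maximal partial weak coloring leaves exactly the same number of vertices uncolored modulo $2$; once we know the total number of moves in any completed game has fixed parity, the Grundy value follows, since a game in which every terminal position is reached after a fixed number $m$ of moves has outcome determined by the parity of $m$, and in fact (for such ``parity-rigid'' games) the Grundy value of the starting position is that parity.

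First I would extract from the proof of Theorem~\ref{weak} the structural description of terminal positions: if a game ends with $v$ uncolored, then the fact $(F)$ forces, going outward from $v$, an alternating pattern of uncolored/oppositely-colored vertices, and the case analysis on the parity of $k$ shows the \emph{only} consistent terminal configuration on $C_{2k+1}$ with $k$ even has a very rigid shape. I would make this precise: enumerate exactly which vertices must be uncolored at the end. The natural next step is to show that this forces a \emph{unique} count: on $C_{2k+1}$ with $k$ even, precisely one block of consecutive vertices around the special vertex $v$ must remain uncolored (the $l_{2i+1}, r_{2i+1}$ together with $v$), and all others are colored, so the number of uncolored vertices is constant over all terminal positions — hence the number of moves is constant, hence its parity is constant. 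Actually, the cleaner route: show directly that \emph{any} two terminal positions of the game on $C_n$ have the same number of colored vertices mod $2$, using the local structure theorem, so we need not pin down the count exactly, only its parity.

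Then I would invoke the elementary lemma that if $G$ is a short impartial game in which every play-to-termination sequence has length congruent to $m \pmod 2$, then $\G(G) = m \bmod 2$: indeed by induction on game length, every option $G'$ has all its plays of length $\equiv m-1$, so by induction $\G(G') = (m-1)\bmod 2 = 1 - (m\bmod 2)$, whence $\Mex\{1-(m\bmod 2)\} = m \bmod 2$. Combining this with the parity-rigidity established above gives $\G(C_n) \equiv (\text{number of uncolored vertices at the end}) \pmod 2$, which is the statement; and since the outcome (and even the Grundy value) is forced irrespective of the choices made, ``the strategies of the players do not matter.''

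The main obstacle is the structural step: verifying that the local pattern forced by fact $(F)$ genuinely determines the parity of the number of uncolored vertices for \emph{all} terminal positions, not merely showing existence of one uncolored vertex. In particular one must handle the possibility of several uncolored vertices not all accounted for by a single outward chain from one $v$, and confirm that the constraints of weak coloring (every colored non-isolated vertex needs an oppositely-colored neighbor) propagate consistently around the whole cycle. I expect a short induction on $k$, mirroring the case split ``$k$ odd impossible / $k$ even rigid'' already in the proof of Theorem~\ref{weak}, to close this gap cleanly.
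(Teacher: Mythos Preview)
Your overall framework is right---show that every play has the same length modulo $2$ and then apply the elementary $\Mex$ lemma you state---but you have misread the proof of Theorem~\ref{weak}. In that proof, \emph{both} parity cases for $k$ end in a contradiction: when $k$ is odd the two adjacent uncolored vertices $l_k,r_k$ could still be played on, and when $k$ is even the vertices $l_{k-1},r_{k-1}$ (which fact $(F)$ says are uncolored) are each adjacent to an oppositely-colored pair and hence can be colored. So the argument does \emph{not} exhibit a ``rigid shape'' of terminal positions with a fixed block of uncolored vertices when $k$ is even; it shows that \emph{no} vertex can remain uncolored in any terminal position. The hypothesis ``second player has a winning strategy'' in that proof is used only to produce a single uncolored vertex $v$ at game's end; the remainder of the argument derives a contradiction from the mere existence of such a $v$ in a maximal partial weak coloring.

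Once you see this, the ``main obstacle'' you identify evaporates: the number of uncolored vertices in every terminal position is exactly zero, so from any position with $m$ uncolored vertices every play has length precisely $m$, and your parity lemma gives $\G = m \bmod 2$ immediately. There is no need to worry about multiple uncolored vertices, chains from several base points, or an induction on $k$. This is exactly the paper's proof, which is a single sentence: the proof of Theorem~\ref{weak} already shows that each vertex is colored at the end of the game regardless of the moves made.
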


\begin{proof}
	According to the proof of Theorem \ref{weak}, each vertex is colored at the end of the game, independent of the moves of both players. 
\end{proof}

	Hence, {\sc Weak 2-coloring} is completely solved on paths and cycles. We did not investigate this game on other kinds of graphs, but we think that playing on any graph may not be as hard as the previous problems. This consideration is due to the fact that every graph can be weakly colored with two colors. \\
	
\begin{open}
~\begin{itemize}
\item What is the computational complexity of {\sc Weak 2-coloring} on any graph? Is it polynomial?
\item Does the problem become straightforward if there is more than two available colors?
\end{itemize}
\end{open}

\section{2-distance Colorings}
\label{distanceColoring}

	In this section we study a kind of graph coloring called {\it 2-distance coloring}. It has been previously studied in \cite{Fer}. As before, we study it in the context of impartial combinatorial games.

\begin{definition}
        \label{defDistanceColoring}
	{\sc $d$-distance $k$-coloring game} is an impartial coloring ruleset where two vertices at distance less than or equal to $d$ cannot be painted the same color.
\end{definition}

	Our results focus mainly on the case where $d = 2$.
	
\begin{theorem}
	{\sc 2-distance 2-coloring} starting positions are in $\PP$ when $G$ is an even length path or an even length cycle.
\end{theorem}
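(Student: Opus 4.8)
The plan is to generalize the pairing argument of Proposition~\ref{perdant} from adjacency to distance. First I would exhibit, for $G$ an even path $P_{2k}$ or an even cycle $C_{2k}$, a fixed-point-free involution $s$ of $V(G)$ that is moreover an \emph{isometry}, i.e.\ $d_G(s(x),s(y)) = d_G(x,y)$ for all $x,y$. For the path with vertices $v_1,\dots,v_{2k}$ take $s(v_i) = v_{2k+1-i}$ (the reversal); for the cycle with vertices $v_0,\dots,v_{2k-1}$ take $s(v_i) = v_{i+k \bmod 2k}$ (the antipodal rotation). In both cases $s$ is visibly an involution with no fixed point, and a one-line computation of distances shows $s$ preserves $d_G$.

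Then I would have the second player use the mirroring strategy: whenever the first player colors a vertex $v_0$ with a color $c$, the second player colors $s(v_0)$ with the opposite color $\bar c$. I would prove by induction on the (even) number of moves that the second player can always do this, maintaining the invariant $(P2)$: after each of the second player's moves, every colored vertex $v$ has $s(v)$ colored the opposite color. It is vacuously true at the start, and it is preserved by the paired moves $\{v_0 \mapsto c,\ s(v_0)\mapsto \bar c\}$ as soon as the reply is legal, since the two new vertices $v_0$ and $s(v_0)=s(v_0)$ are opposite-colored images of each other and the old vertices are untouched. Legality has two parts. First, $s(v_0)$ is still uncolored: by $(P2)$ before the first player's move, if $s(v_0)$ were colored then $v_0 = s(s(v_0))$ would be colored, contradicting that the first player just colored it, and the first player's move colored only $v_0 \neq s(v_0)$. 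Second, coloring $s(v_0)$ with $\bar c$ respects the distance-$2$ constraint: if some already-colored $w$ with $d_G(w,s(v_0)) \le 2$ had color $\bar c$, then $w \neq v_0$ (which has color $c$), so $w$ was colored before the first player's move, whence by $(P2)$ the vertex $s(w)$ is colored $c$; but $d_G(s(w),v_0) = d_G(s(w),s(s(v_0))) = d_G(w,s(v_0)) \le 2$ because $s$ is an isometric involution, and $s(w)\neq v_0$ (else $w=s(v_0)$ would be colored), so the first player's move of coloring $v_0$ with $c$ already violated the constraint---a contradiction.

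Hence the second player always has a legal reply, so the first player is the first to be stuck, and the starting position is in $\PP$. The step I expect to require the most care is the second legality check: one must make sure the distance bound genuinely transfers under $s$ (this is exactly where the isometry hypothesis, not merely being an automorphism, is used) and one must rule out the degenerate coincidences $w=v_0$ and $s(w)=v_0$. Everything else is a routine transcription of the proof of Proposition~\ref{perdant} with ``adjacent'' replaced by ``at distance at most $2$''; in fact the very same argument would give that {\sc $d$-distance $2$-coloring} on even paths and even cycles is in $\PP$ for every $d$, since it uses only that $s$ is a fixed-point-free, distance-preserving involution.
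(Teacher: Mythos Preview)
Your proof is correct and follows essentially the same strategy as the paper: a fixed-point-free, distance-preserving involution together with the opposite-color mirroring response of Proposition~\ref{perdant}. The paper packages this more tersely by observing that {\sc 2-distance 2-coloring} on $G$ is exactly {\sc Proper 2-coloring} on the square graph $G'=(V,\,E\cup\{(u,v):d_G(u,v)=2\})$ and then invoking Proposition~\ref{perdant} with the reversal $s(i)=2k+1-i$ (which works for the even cycle as well); your direct isometry argument is simply this equivalence unrolled, and your choice of the antipodal rotation on $C_{2k}$ is an equally valid alternative.
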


\begin{proof}
	Playing {\sc 2-distance 2-coloring} on a graph $G=(V,E)$ is equivalent to playing {\sc proper 2-coloring} on $G'=(V,E')$ where $E'=E\cup \lbrace (u,v):d(u,v)=2\rbrace$. The notation $d(u,v)$ corresponds to the (edge)-distance between two vertices. When $G$ is a path or a cycle, by applying the same proof as for Proposition \ref{perdant} on $G'$, we can conclude it is in $\PP$. Indeed, if we label the vertex of $G'$ from $1$ to $2k$, the involution that we consider is $s(i)=2k-i+1$.
\end{proof}

	The case of odd length paths and cycles remains open in the general case. We implemented a recursive algorithm to compute the outcome class ($\PP$ or $\N$) of odd paths for lengths between 3 and 17. We did not manage to compute it for higher lengths, beacause of the complexity of the algorithm. Here are the first results:
\begin{center}
	\begin{tabular}{|c|c|}
  	\hline
  		Length & Status \\ \hline
  		3 & $\PP$ \\ \hline
  		5 & $\N$ \\ \hline
  		7 & $\N$ \\ \hline
  		9 & $\PP$ \\ \hline
  		11 & $\PP$ \\ \hline
  		13 & $\N$ \\ \hline
  		15 & $\PP$ \\ \hline
  		17 & $\PP$ \\ \hline
	\end{tabular}
\end{center}

	In order to simplify the computation, we prove the following property :
	
\begin{prop}
	If there is a winning strategy for the first player in {\sc 2-distance 2-coloring} on odd paths, the central vertex must be colored by the first player.
\end{prop}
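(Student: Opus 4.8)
The plan is to argue by contradiction: suppose the first player has a winning strategy but does not begin by coloring the central vertex $v_0$ of the odd path $P$ of length $2k+1$. I will show that the second player can then "steal" enough symmetry to win, contradicting the assumption. Label the vertices $1,\ldots,2k+1$ and let $s(i)=2k+2-i$ be the reflection about the centre, so that $s$ is an involution with unique fixed point $v_0=k+1$. The key structural fact I would use is the one behind Proposition~\ref{perdant} (and its $2$-distance analogue): if a position is symmetric under a fixed-point-free reflection with respect to the current coloring, the player to move is in a losing position, because the opponent can always mirror. Here the subtlety is that $s$ has a fixed point, so pure mirroring by the second player is obstructed exactly at $v_0$.

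First I would have the second player respond to the first player's opening move on some vertex $i\neq k+1$ by playing on $s(i)$ with the same color; this is legal precisely because $i$ and $s(i)$ are not adjacent and are not at distance $2$ in $P$ (one needs $|i-s(i)|=|2k+2-2i|\geq 3$, which holds for $i\neq k, k+1, k+2$, and the small exceptional cases $i=k,k+2$ must be handled separately by a direct check, using that $v_0$ is still uncolored). After this the position is symmetric under $s$ with $v_0$ uncolored. The second player now maintains the following invariant after each of their moves: the colored vertices are symmetric under $s$ (matching colors) and $v_0$ is uncolored. Whenever the first player colors a vertex $j\neq k+1$, the second player plays $s(j)$ with the same color; legality follows exactly as in the proof of Proposition~\ref{gagnant}, transported to the graph $G'=(V,E\cup\{(u,v):d(u,v)=2\})$ in which $2$-distance $2$-coloring is ordinary proper coloring. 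The only move that breaks the mirror is the first player coloring $v_0$ itself.

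The main obstacle — and the crux of the argument — is showing that the first player is eventually \emph{forced} to color $v_0$, or that if they never do, the second player still wins. I would argue: so long as $v_0$ is uncolored, every position the second player faces is $s$-symmetric with $v_0$ blank, hence the second player always has the mirror move available and never loses while in this regime. Therefore the only way the first player can win is to at some point play $v_0$; but at that moment the resulting position (after the second player has just moved, the coloring was symmetric, and now $v_0$ is colored, say color $c$) is one in which the remaining blank vertices form an $s$-symmetric set and the coloring is symmetric. I then claim this is a $\mathcal{P}$-position: the second player can mirror the first player's subsequent moves about $v_0$ (now legal for all moves, since the fixed point is occupied), so the \emph{first} player runs out of moves first. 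Hence the first player never gains anything by playing $v_0$ from a symmetric position either, contradicting the existence of a first-player winning strategy that avoids opening on $v_0$. The one technical point to nail down carefully is the base of the mirroring in the exceptional opening replies $i\in\{k,k+2\}$ and the parity bookkeeping ensuring that after $v_0$ is finally colored it is indeed the first player's turn on an $s$-symmetric board; both are finite checks rather than deep difficulties.
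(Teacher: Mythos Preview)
Your proposal has two genuine gaps, and it also diverges from the paper's argument in an important way.

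\medskip

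\textbf{The mirroring rule.} You mirror with the \emph{same} colour. In the $2$-distance game this breaks precisely at the cases you flag as ``small exceptional cases $i\in\{k,k+2\}$'': if Player~1 opens at $k$ with colour $c$, then $s(k)=k+2$ is at distance~$2$ from $k$, so Player~2 cannot legally place colour $c$ there. This is not a direct check that goes through; it is a hard obstruction, and the same obstruction recurs in your Phase~2 (after the centre is coloured, a move on $k$ still cannot be mirrored with the same colour at $k+2$). The paper avoids this entirely by mirroring with the \emph{opposite} colour, exactly as in Proposition~\ref{perdant}: since $i$ and $s(i)$ then carry different colours, their mutual distance is irrelevant, and the response is always legal for every $i\neq k+1$.

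\medskip

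\textbf{The Phase 2 logic.} Suppose your symmetric invariant did hold up to the moment Player~1 colours $v_0$. Immediately afterwards it is \emph{Player~2's} turn on an $s$-symmetric position with the fixed point occupied. You call this a $\mathcal P$-position and say ``the second player can mirror the first player's subsequent moves'', but Player~2 must move \emph{before} Player~1 does, so there is nothing to mirror; if anyone can mirror from here it is Player~1, which would mean Player~1 wins---the opposite of the contradiction you need. In other words, even if the invariant were maintainable, it would not rule out a first-player winning strategy that plays $v_0$ later.

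\medskip

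\textbf{Comparison with the paper.} The paper's proof is short: assume Player~1 never colours the centre; Player~2 answers each move on $i$ by colouring $s(i)$ with the other colour; this is always legal (the opposite-colour invariant transports the proper-colouring argument to the graph $G'$ with the distance-$2$ edges added), so Player~2 never runs out of moves, contradiction. Note that the statement as proved is only that Player~1 must colour the centre \emph{at some point}; your reformulation ``must begin by colouring $v_0$'' is stronger and is not what the paper establishes.
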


\begin{proof}
	Vertices are labeled from $1$ to $2k+1$. Let $s$ be the involution $s(i)=2k+1-i$. Suppose that the first player can win without coloring the middle vertex. For each vertex $i$ colored by the first player, the second player can color $s(i)$ with the other color. Finally the second player will win the game.
\end{proof}

	Hence we cannot conclude the general case of {\sc 2-distance 2-coloring} on odd length paths, but we solve the other cases and the odd lengths from 1 to 17. This game is complex to study, particularly since it does not simplify into a sum of two smaller games. 	

  Another indication for the complexity of this game is the computational hardness of the general version.  We now show that the game is \cclass{PSPACE}-complete in general

\begin{prop}
  \label{distancePSPACE}
  Determining whether a position in {\sc 2-Distance 2-coloring} is in $\N$ is \cclass{PSPACE}-complete.
\end{prop}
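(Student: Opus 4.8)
The plan is to reduce from \textsc{Node-Kayles}, following the same template used in Propositions \ref{properPSPACE}. The difficulty here is that \textsc{2-Distance 2-coloring} forbids repeating a color not only on adjacent vertices but also on vertices at distance $2$, so the naive gadget that pins down neighbor colors (attaching pre-colored pendant vertices) no longer behaves cleanly: a pendant vertex colored $c$ forbids $c$ on the attachment vertex \emph{and} on everything within distance $2$ of it, including other attachment vertices. So the first step is to design a gadget that, for each \textsc{Node-Kayles} vertex $v$, produces a single uncolored vertex $\widehat v$ whose only legal color is (say) $1$, and such that coloring $\widehat v$ forbids coloring $\widehat w$ precisely when $vw \in E$, while imposing no other interaction.

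Concretely, I would replace each vertex $v$ with a small uncolored vertex $\widehat v$ and, for each $v$, attach a short pre-colored path "rake" hanging off $\widehat v$ that forces $\widehat v$ to be color $1$ while keeping the forcing machinery of distinct original vertices far enough apart (distance $\ge 3$) that the gadgets of nonadjacent vertices never interfere and never get accidentally colored during play. For an edge $vw \in E$, I would connect $\widehat v$ and $\widehat w$ by an edge (or, if distance-$2$ effects need to be absorbed, by identifying them with a common intermediate structure) so that once $\widehat v$ is colored $1$, the vertex $\widehat w$ — being at distance $\le 2$ from $\widehat v$ — can no longer be colored $1$, hence cannot be colored at all. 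One must check that the auxiliary pre-colored vertices never provide extra moves: since every non-$\widehat{\,\cdot\,}$ vertex is already colored and only color $1$ was ever a candidate for the $\widehat{\,\cdot\,}$ vertices, the only moves in the whole game are "color some $\widehat v$ with $1$," and such a move is legal exactly when no $\widehat w$ with $vw\in E$ has already been colored. This makes the game isomorphic to \textsc{Node-Kayles} on $G$, so a move sequence is a first-player win in one game iff it is in the other, giving \cclass{PSPACE}-hardness.

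Membership in \cclass{PSPACE} is immediate and handled exactly as before: every position is short, and the total number of moves is bounded by $|V'|$, so the game tree has polynomial depth and can be evaluated by a standard recursive (alternating polynomial-space) procedure. Combining hardness and membership yields that deciding whether a \textsc{2-Distance 2-coloring} position is in $\N$ is \cclass{PSPACE}-complete.

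The main obstacle I anticipate is getting the gadget geometry right: with the distance-$2$ constraint, I must be careful that (i) the pre-coloring that forces $\widehat v$'s color is consistent — no pre-colored vertex is within distance $2$ of two pre-colored vertices of the opposite color in a way that was already illegal — and (ii) the gadgets for two nonadjacent vertices are genuinely independent, i.e. no spurious distance-$\le 2$ path links $\widehat v$ to $\widehat w$ or to $w$'s forcing vertices when $vw \notin E$. The cleanest fix is probably to subdivide generously: route every original edge through a long enough pre-colored path so that "distance $\le 2$ interference" is entirely local to each gadget, and encode adjacency by a dedicated short connector whose length is tuned so that exactly the distance-$2$ rule (not the adjacency rule) does the forbidding. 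Verifying these local conditions — ideally with a figure analogous to Figure \ref{fig:properKColoringReduction} — is the bulk of the work; the global equivalence argument is then routine.
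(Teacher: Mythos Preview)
Your overall plan coincides with the paper's: reduce from \textsc{Node-Kayles}, pre-color auxiliary vertices so that every playable vertex has only color~$1$ available, and arrange the geometry so that two playable vertices block each other exactly when the corresponding \textsc{Node-Kayles} vertices are adjacent. Membership in \cclass{PSPACE} is handled identically.

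The gap is precisely the one you flag yourself: you never actually give a gadget, and your first instinct---hang a color-forcing rake off each $\widehat v$ and then join $\widehat v$ to $\widehat w$ by an edge when $vw\in E$---does not work as stated. If $v$ and $w$ are non-adjacent in $G$ but share a common neighbor $x$, then $\widehat v\,\widehat x\,\widehat w$ is a path of length~$2$ in $G'$, so coloring $\widehat v$ with~$1$ illegitimately kills $\widehat w$. Subdividing every edge cures that, but then each new midpoint is itself playable (it sees a color-$2$ pendant at distance~$2$ but nothing colored~$1$ within distance~$2$), so the game is no longer \textsc{Node-Kayles}. Your closing paragraph gestures at ``routing through a long enough pre-colored path,'' but a pre-colored midpoint cannot take either color without violating the partial coloring, so the construction still has to be pinned down.

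The paper's resolution is to hang the forcing gadget off the \emph{edge midpoints} rather than off the vertices. For each edge $e=(u,w)\in E$ it introduces a midpoint $v_{e,0}$ adjacent to $u$ and $w$, and attaches a pendant path $v_{e,0}\,v_{e,2}\,v_{e,1}$ with $v_{e,2}$ pre-colored~$2$ and $v_{e,1}$ pre-colored~$1$. Then $v_{e,0}$ sees both colors within distance~$2$ and is dead; every original vertex sees some $v_{e,2}$ at distance~$2$, so only color~$1$ is available there; and two original vertices are at distance~$2$ in $G'$ (through their $v_{e,0}$) exactly when they were adjacent in $G$, since any route between non-adjacent original vertices must pass through at least two midpoints and has length $\ge 4$. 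This single placement choice simultaneously solves both obstacles you anticipated, and is the one concrete idea your sketch is missing.
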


\begin{proof}
  Since all instances of the game are short and the number of plays is bounded above by the number of unpainted vertices, the problem is in \cclass{PSPACE}.  As with the other hardness proofs in this paper, we will reduce from \textsc{Node-Kayles}.  Again, the gadgets will restrict players to only have one color available, and also separate each original pair of adjacent vertices by an extra unpaintable vertex.

  Given an instance of \textsc{Node-Kayles}, $G = (V, E)$, create a new graph, $G' = (V', E')$ where 

$V' = V \cup \{v_{e, 0}, v_{e,1}, v_{e,2}: \forall e \in E\}$ and

$E' = \{(u, v_{e, 0}), (w, v_{e, 0}), (v_{e, 0}, v_{e, 2}), (v_{e, 2}, v_{e, 1}) : \forall (u, w) = e \in E\}$.

  \begin{figure}[!ht]
	  \label{fig:2Distance2ColoringReduction}
	  \centering
		  \includegraphics[width=60mm]{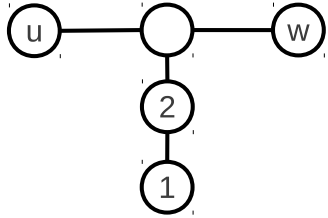}
	  \caption{Reduction of a \textsc{Node-Kayles} edge $(u, w)$ to {\sc 2-Distance 2-Coloring}.  Numbers indicate colors for vertices.}
  \end{figure}

  To complete the reduction, we paint all vertices $v_{e, 1}$ with color 1 and all vertices $v_{e, 2}$ with color 2.  In the resulting graph, \emph{all} vertices are within distance 2 of a vertex painted with color 2, and all vertices not in $V$ are within distance 2 of a vertex pained with color 1.  Thus, only vertices in $V$ can be painted (and only with color 1).  These are all within distance 2 of another vertex from $V$ exactly when those vertices were adjacent in $G$.  Thus, each move in this game is equivalent to the corresponding move on the \textsc{Node-Kayles} instance, meaning this game is also \cclass{PSPACE}-hard.

  Thus, determining whether a position of {\sc 2-Distance 2-Coloring} is in $\N$ is \cclass{PSPACE}-complete.
\end{proof}

We can expand on this by providing for any number of colors.

\begin{corollary}
 {\sc 2-Distance k-Coloring} is \cclass{PSPACE}-complete for any number of colors, $k$.
\end{corollary}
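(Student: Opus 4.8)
The plan is to extend the reduction from \textsc{Node-Kayles} used in Proposition \ref{distancePSPACE}. Membership in \cclass{PSPACE} is immediate as there: every position is short and the number of moves is at most the number of uncolored vertices, so for each fixed $k$ it suffices to prove \cclass{PSPACE}-hardness. Starting from a \textsc{Node-Kayles} instance $G=(V,E)$, I would first build the graph $G'$ exactly as in Proposition \ref{distancePSPACE} (each edge $e=(u,w)$ replaced by the path $u - v_{e,0} - v_{e,2} - v_{e,1}$ together with the extra edge $(w,v_{e,0})$, then $v_{e,1}$ pre-colored $1$ and $v_{e,2}$ pre-colored $2$), and then, to consume the remaining colors, attach to every $v\in V$ a pendant vertex $p_{v,j}$ for each color $j\in\{3,\dots,k\}$, joined only to $v$, pre-colored with $j$. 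Call the result $G''$; it is clearly polynomial-size for each fixed $k$. (When $k=1$ no pendants are added and $v_{e,2}$ is left uncolored; when $k=2$ the construction is exactly that of Proposition \ref{distancePSPACE}.)

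The first thing to verify is that the partial coloring of $G''$ is legal, i.e.\ that no two vertices at distance at most $2$ share a color. The colored vertices are the $v_{e,1}$ (color $1$), the $v_{e,2}$ (color $2$), and the pendants $p_{v,j}$ (color $j\ge 3$); one checks that $v_{e,1}$ and $v_{e',1}$, and likewise $v_{e,2}$ and $v_{e',2}$, lie at distance at least $4$ in $G''$, that two pendants $p_{v,j},p_{v',j}$ of the same color lie at distance at least $4$, and that differently-colored vertices impose no constraint on one another. This is routine but is the step requiring the most care, since inserting the pendants perturbs distances.

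Next I would argue that the only playable vertices of $G''$ are the vertices of $V$, and that each of them can only ever be colored $1$. Each $v_{e,0}$ is at distance $1$ from the color-$2$ vertex $v_{e,2}$, at distance $2$ from the color-$1$ vertex $v_{e,1}$, and at distance $2$ from a color-$j$ pendant (through its neighbor $u$ or $w$) for every $j\ge 3$, so it can never be colored; the $v_{e,1},v_{e,2},p_{v,j}$ are already colored. An original vertex $v$ is adjacent to each of its pendants and within distance $2$ of $v_{e,2}$ for every incident edge $e$, so colors $2,\dots,k$ are permanently forbidden at $v$, leaving only color $1$; moreover the color-$1$ vertices $v_{e,1}$ sit at distance at least $3$ from every original vertex, so they never obstruct a move on $V$. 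Finally, because the original edges were subdivided by the $v_{e,0}$'s and the pendants are leaves, two vertices of $V$ are at distance at most $2$ in $G''$ exactly when they are adjacent in $G$; hence coloring $v$ (necessarily with $1$) is legal precisely when no $G$-neighbor of $v$ has yet been colored.

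Putting these together, the plays of {\sc 2-Distance $k$-Coloring} on $G''$ are in bijection with the plays of \textsc{Node-Kayles} on $G$, so their starting positions lie in the same outcome class; this makes {\sc 2-Distance $k$-Coloring} \cclass{PSPACE}-hard and therefore \cclass{PSPACE}-complete. I expect the main obstacle to be precisely the distance bookkeeping in the second and third steps: one must be sure that the pendants create neither a monochromatic pair within distance $2$ nor a distance-$2$ pair of original vertices that was not an edge of $G$, and that the pre-colored color-$1$ vertices stay far enough from $V$.
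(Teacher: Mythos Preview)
Your argument is correct and follows essentially the same approach as the paper: kill the extra colors $3,\dots,k$ by hanging pre-colored pendant vertices onto the gadget of Proposition~\ref{distancePSPACE}. The only difference is cosmetic---the paper attaches its color-$j$ pendants to the edge-gadget centers $v_{e,0}$ rather than to the original vertices $v\in V$---and the distance checks you carry out (which the paper omits) go through equally well for either placement.
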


\begin{proof}
 For each additional color, add another node painted that color adjacent to $v_{e, 0}$ for each edge $e \in E$.  Now that color is not available to paint any vertices on the new graph.
\end{proof}

\begin{open}
Among the different games listed in this paper, this ruleset is certainly the hardest one to solve. The investigation of odd paths remains for us paramount to go further on it.
\end{open}

\section{Sequential Coloring (also called Online Coloring)}
\label{sequentialColoring}

	A sequential coloring is based on assigning colors to vertices in some predetermined order.  Games based on sequential colorings were introduced by Bodlaender \cite{Bod} in 1989, where he presented very different results for the separate cases $k=2$ and $k \geq 3$.  To the best of our knowledge, the sequential coloring game is the only variant that has been previously considered in the literature on impartial coloring games.

\begin{definition}
        \label{defSequentialColoring}
	{\sc Sequential $k$-coloring} is an impartial coloring ruleset which includes a sequential function, $f:\{1, \ldots, \left|V\right|\} \mapsto V$.  On the $i$-th turn, the current player \emph{must} properly paint the vertex $f(i)$.  In other words, the only legal moves must paint the uncolored vertex with the lowest index, and this coloring must remain proper; no neighboring vertices may be painted the same color.
\end{definition}

	For $k \geq 3$, Bodlaender \cite{Bod} finds the game to be computationally hard.
	
\begin{prop}
	For $k \geq 3$, it is \cclass{PSPACE}-complete to determine whether an position of \textsc{Sequential $k$-coloring} is in $\N$ \cite{Bod}.
\end{prop}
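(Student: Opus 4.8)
The statement splits into membership in \cclass{PSPACE} and \cclass{PSPACE}-hardness; the first half is routine. Every instance of \textsc{Sequential $k$-coloring} is short: on turn $i$ the only colourable vertex is $f(i)$, so the game lasts at most $|V|$ turns and its game tree has depth at most $|V|$ and branching at most $k$. A depth-first minimax evaluation of this tree decides the outcome class using space polynomial in the input, so the problem is in \cclass{PSPACE}. The work is therefore in the hardness reduction, which I would take from the quantified Boolean formula game.

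\textbf{The reduction.}
Start from \textsc{QBF}, the game on a closed formula $\Phi = Q_1 x_1\, Q_2 x_2 \cdots Q_n x_n\, \phi$ with $\phi$ in 3-CNF and the $Q_i$ strictly alternating (WLOG, padding with dummy variables): the players assign $x_1,\ldots,x_n$ in order, the $\exists$-player trying to satisfy $\phi$ and the $\forall$-player to falsify it, and deciding who wins is \cclass{PSPACE}-complete. First reduce to the case $k=3$: every vertex we build is joined to a fixed block of pre-coloured vertices carrying the colours $4,\ldots,k$, so those colours are never available and only $\{1,2,3\}$ matter. For each variable $x_i$ introduce a vertex $u_i$, place $u_1,\ldots,u_n$ in positions $1,\ldots,n$ of the order $f$, and join $u_i$ to a pre-coloured vertex of colour $3$; then $u_i$ must receive colour $1$ (read as \textsc{true}) or colour $2$ (read as \textsc{false}), the $u_i$ are pairwise non-adjacent so no player is ever stuck here, and because the quantifiers alternate the first player colours exactly the $\exists$-variables and the second player exactly the $\forall$-variables. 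Thus the variable phase of the coloring game \emph{is} the \textsc{QBF} game, with the first player in the role of $\exists$.

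\textbf{Gadgets, verdict vertex, and conclusion.}
After the variable vertices I would append, for each clause, a small gadget whose vertices occur later in $f$, each of whose moves is forced to a unique legal colour by pre-coloured neighbours, and which ``reads'' the colours of the three literal vertices of that clause; the gadgets are arranged so that the very last vertex $z$ of $f$ has a legal colour available if and only if the assignment produced in the variable phase satisfies $\phi$. Finally, pad with one extra forced dummy vertex if necessary so that $z$ falls on an odd turn. Then the player to move at $z$ is the first player, who can colour $z$ — exhausting the board and leaving the opponent with no move — exactly when $\phi$ is satisfied, and otherwise loses immediately. Since the $\exists$-player wants $\phi$ satisfied and the $\forall$-player wants the opposite, the incentives match the coloring game, so the first player wins the coloring game iff the $\exists$-player wins \textsc{QBF} on $\Phi$, i.e.\ iff $\Phi$ is true. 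The construction is clearly polynomial, giving \cclass{PSPACE}-hardness and hence \cclass{PSPACE}-completeness.

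\textbf{Where the difficulty lies.}
The delicate part is the clause/verdict gadgetry: one must (i) choose adjacencies and pre-colourings so that, each time a gadget vertex comes up, its set of available colours is \emph{exactly} the intended one, so the ``forced'' moves genuinely have no alternative and neither player can derail the simulation; (ii) realise the OR over a clause's three literals and the AND over all clauses purely through proper-colouring constraints, so that $z$ is blocked precisely when every clause is satisfied; (iii) guarantee that no vertex before $z$ is ever blocked, so the game cannot end early; and (iv) keep the parity of the total vertex count under control so the winner correspondence is exact. The forced ordering is actually an asset, since at every moment only one vertex is colourable, which gives tight control of the simulation; and the role of $k\ge 3$ is precisely that the third colour is free to carry the gadget logic, which is why the $k=2$ case behaves so differently.
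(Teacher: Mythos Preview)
The paper does not supply its own proof of this proposition; it is stated with a citation to Bodlaender and no argument is given, so there is nothing in the paper to compare your attempt against.

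Evaluating your sketch on its own merits: the overall architecture---\cclass{PSPACE} membership via bounded-depth game-tree search, hardness by reducing from the QBF game with variable vertices placed first in $f$ and forced to colours $1$ or $2$ via a pre-coloured neighbour of colour $3$, followed by clause machinery---is sound and is indeed the shape of Bodlaender's original argument. The gap is that you do not actually construct the clause and verdict gadgets; you list the properties they must satisfy (every intermediate move forced to a unique colour, an OR over each clause's literals, an AND over all clauses, no premature blocking, correct parity at $z$) but leave the construction open. That construction is the technical heart of the reduction, and in particular your chosen design---a \emph{single} verdict vertex $z$ that is blocked iff the formula is false, with \emph{every} earlier gadget vertex having exactly one legal colour regardless of the assignment---amounts to simulating an AND-of-ORs circuit with three proper colours while keeping all intermediate nodes assignment-independent, which is far from obvious and is not addressed. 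A more tractable design is to let the game block at the first unsatisfied clause rather than funneling everything to one vertex; then only a per-clause OR gadget and parity padding are needed. As written, your proposal is a correct outline with the central lemma left unproved.
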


	The remainder of this section considers the case where $k = 2$.  In this case, Bodlaender presents a polynomial-time algorithm to solve the game on any graph. The complexity of his algorithm is about $O(n+e\alpha(e,n))$ where $\alpha$ is the inverse Ackermann function, and $n$ the order of the graph.

	According to this definition, player 1 colors the vertices $f(2i+1)$, and player 2 the vertices $f(2i)$ for $i\geq 0$. Although Bodlaender's algorithm is very efficient, we here propose a better algorithm for paths and cycles, of complexity $O(n)$. Let $P_{n}$ be the path of length $n$, with the vertices labeled from $1$ to $n$, and consider {\sc sequential 2-coloring} on $P_{n}$, with a given sequential function $f$. We denote $g=f^{-1}$.\\

\begin{definition}
	We classify the set of vertices of $P_n$ into three types :
	\begin{itemize}
		\item Source vertices: these are local minima of f. $v$ is a source vertex if and only if $g(v)<g(v_{1})$ and $g(v)<g(v_{2})$, where $v_1$ and $v_2$ are adjacent vertices of $v$. Note that if $v$ is an extremity of $P_n$, $v_2$ does not exist and only the first condition is considered. Roughly speaking, source vertices are those that are colored  before their neighbours.
		\item Closed vertices: these are local maxima of f. Let $v_{1}$ and $v_{2}$ be adjacent vertices of $v$. $v$ is a closed vertex if and only if $g(v)>g(v_{1})$ and $g(v)>g(v_{2})$. Roughly speaking, if $v_1$ and $v_2$ have different colors, the player who colors a closed vertex $v$ loses the game.
		\item Constrained vertices: these are neither maxima or minima of f. Let $v_{1}$ and $v_{2}$ be two adjacent vertices of $v$ (without loss of generality, we suppose $g(v_{2})>g(v_{1})$). $v$ is a constrained vertex if and only if $g(v_{2})>f(v)>g(v_{1})$.
	\end{itemize}
\end{definition}
	
\begin{prop}
	\label{choice}
	A player has to make a color choice only on source vertices. He may lose only when playing on closed vertices.
\end{prop}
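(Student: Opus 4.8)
The plan is to analyze what happens when a player is forced to color a vertex $v = f(i)$ of each of the three types, and show that the only genuine decision point is a source vertex, while the only place a legal move can fail to exist is a closed vertex. The key observation to set up first is that when $v$ is colored, its neighbors $v_1, v_2$ that have already been colored (those with $g(v_j) < i$) constrain the allowed colors, and neighbors not yet colored impose no constraint at all.

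First I would handle source vertices. If $v$ is a source vertex, then by definition $g(v) < g(v_1)$ and $g(v) < g(v_2)$ (or only the first condition if $v$ is an endpoint), so at the moment $v$ is played, neither neighbor has been colored yet. Hence both colors are available and legal, and the player genuinely chooses; moreover there is always a legal move, so no loss can occur here. Next, constrained vertices: here exactly one neighbor, say $v_1$ with $g(v_2) > i > g(v_1)$, has been colored and the other has not. The proper-coloring rule then forbids the single color $c(v_1)$ and permits the single color $\{R,B\}\setminus\{c(v_1)\}$; so the move is forced but always legal — no choice, no loss. Finally, closed vertices: both neighbors $v_1, v_2$ have been colored before $v$ (since $g(v) > g(v_1)$ and $g(v) > g(v_2)$). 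If $c(v_1) = c(v_2)$, the remaining color is forced but legal; if $c(v_1) \neq c(v_2)$, both colors are forbidden, there is no legal move, and the player to move loses. So closed vertices are exactly the positions where a player may lose, and in no case is there a real color choice.

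Two small points need care. One must be sure the case analysis is exhaustive: every internal vertex of $P_n$ is a local minimum, a local maximum, or neither of $g$ (equivalently $f^{-1}$), and the endpoints are covered by the degenerate "only $v_1$" clauses — an endpoint is a source vertex if $g(v) < g(v_1)$ and a closed vertex otherwise, and since it has only one neighbor it can never produce a no-move situation, consistent with the claim. The other point is the phrase "has to make a color choice": I would read this as "the choice of color can affect the outcome only at source vertices," which follows because at constrained and closed vertices the set of legal colors has size at most one, so there is nothing to decide.

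The main obstacle — really the only subtlety — is making the endpoint/degenerate cases line up cleanly with the definitions, since the definition of constrained vertex explicitly assumes two neighbors, and one must check that endpoints fall unambiguously into "source" or "closed" and never cause the current player to be stuck. Everything else is a direct unpacking of the proper-coloring constraint against the ordering imposed by $f$.
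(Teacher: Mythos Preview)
Your proof is correct and follows essentially the same case analysis as the paper's own argument: at source vertices no neighbor is yet colored so both colors are available, at constrained vertices exactly one neighbor is colored so the move is forced but legal, and at closed vertices both neighbors are colored so the move is either forced or impossible. Your treatment is in fact more careful than the paper's, since you explicitly handle the endpoint cases and verify that a closed endpoint can never be a losing vertex, whereas the paper's proof leaves these degenerate cases implicit.
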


\begin{proof}
	For any vertex $v$ which is not a source, at least one adjacent vertex of $v$ has been previously painted. On constrained vertices, there is exactly one vertex colored, which means the color of $v$ is imposed. If $v$ is a closed vertex, either $v$ is not colorable (if the two adjacent vertices are holding different colors), or the color of $v$ is imposed.
\end{proof}

\begin{theorem}
	Given a path $P$ of length $n$ and a sequential function $f$, we can decide in time $O(n)$ whether this position is in $\PP$ or $\N$.
\end{theorem}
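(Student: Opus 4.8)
The plan is to collapse the game into a much smaller combinatorial object via Proposition \ref{choice}, and then to solve that object by a single linear scan.

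First I would record the structure of $f$ along the path. Reading the vertices $1,\dots,n$ in path order, the source vertices (local minima of $g$) and the closed vertices (local maxima of $g$) strictly alternate, and, since the two extreme vertices of a path have only one neighbour, the sequence of ``events'' always begins and ends with a source; thus it has the shape $S_1,C_1,S_2,C_2,\dots,C_{m-1},S_m$ with each closed vertex $C_i$ flanked by the sources $S_i$ and $S_{i+1}$. Between two consecutive events $g$ is monotone, so $g(C_i)>\max(g(S_i),g(S_{i+1}))$; in particular the fate of $C_i$ is already decided strictly before the turn on which $C_i$ itself is coloured. By Proposition \ref{choice}, the colour of every constrained vertex and of every forced endpoint is imposed the moment its turn arrives, so those turns are ``dummy'' moves: they consume a turn but involve no decision and can never cost a player the game. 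Hence the only genuine decisions occur on the $m$ source vertices, and the only possible losing move occurs on a closed vertex.

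Next I would linearise the losing conditions over $\mathrm{GF}(2)$. Once a source $S_j$ is coloured with colour $x_j\in\{0,1\}$, the colour propagates by forced alternation along the two monotone runs leaving $S_j$, so the two neighbours of $C_i$ receive colours of the form $x_i\oplus a_i$ and $x_{i+1}\oplus b_i$ for constants $a_i,b_i$ depending only on run lengths; therefore the player who must colour $C_i$ is stuck exactly when $x_i\oplus x_{i+1}\neq a_i\oplus b_i$. Substituting $y_j=x_j\oplus(a_1\oplus b_1)\oplus\cdots\oplus(a_{j-1}\oplus b_{j-1})$, which is a bijection on each source's binary choice and hence just a relabelling of moves, makes every constant vanish, so $C_i$ is ``lethal'' precisely when its two flanking sources were coloured differently. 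The game now reads: the players alternately colour the auxiliary path $S_1-S_2-\cdots-S_m$, with $n-m$ interspersed dummy moves at prescribed times, and the game ends either after all $n$ moves (a second-player win iff $n$ is even) or on the first-in-time lethal closed vertex $C_i$, whose mover loses (a second-player win iff $g(C_i)$ is odd).

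Finally I would solve this reduced game by a scan in $f$-order. The useful facts are that $C_i$ is resolved the instant both of $S_i,S_{i+1}$ have been coloured, that the player who colours the later of $S_i,S_{i+1}$ is the one who gets to choose whether $C_i$ is lethal, and that distinct closed vertices interact only through the source they share; using these, the relevant state of the position can be summarised at every time step by a bounded amount of information (whose turn it is, together with the committed colours at the $O(1)$ runs currently adjacent to an already-coloured source), so each event updates the game status in $O(1)$ and the whole algorithm runs in $O(n)$. The hard part is precisely this last step: one must disentangle two different parities --- the \emph{turn} parity, which decides who controls each lethal closed vertex, and the \emph{length} parity, which decides who wins a completed or prematurely terminated game --- while also accounting for the fact that one source constrains two closed vertices through a single colour bit and that dummy moves silently shift parities. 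The crux of the proof of correctness will be to show that all these interactions remain local, so that a bounded-window case analysis genuinely captures optimal play.
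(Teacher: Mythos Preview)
Your reduction is sound and in fact cleaner than the paper's: the $\mathrm{GF}(2)$ substitution turning every lethality condition into $y_i\neq y_{i+1}$ makes explicit what the paper only illustrates by example, and your observation that whoever colours the later of $S_i,S_{i+1}$ controls whether $C_i$ is lethal is exactly the pivot of the paper's argument. The gap is the final step. You assert that the game state can be summarised by $O(1)$ information per time step and that a bounded-window case analysis suffices, but you neither specify the window nor carry out the analysis, and the $O(1)$-state claim is suspect: when the $f$-order of the sources is far from their path order (say $S_1,S_3,S_5,\dots$ first, then $S_2,S_4,\dots$), at intermediate times there are $\Theta(m)$ disjoint coloured intervals on the auxiliary path, each carrying its own committed bit, and a later source sitting between two such intervals must reconcile both neighbours. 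These reconciliations cascade along the path and do not stay inside any fixed window, so ``locality'' in your sense is not available.

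What the paper actually does is not a constant-state scan but a greedy \emph{pairing} on an explicit linked structure. After discarding constrained vertices, process the closed vertices in increasing $g$-order. For the current closed vertex $u$, let $v$ be the adjacent source with the larger $g$-value; if $g(u)$ and $g(v)$ have the same parity then the owner of $u$ also owns $v$ and will set $y_v$ to match the other side, so delete both $u$ and $v$ and splice their remaining neighbours together; if the parities differ, the opponent owns $v$, will force a mismatch, and the owner of $u$ loses. If every closed vertex is survived, the parity of $n$ decides. Correctness follows from your own ``later source controls $C_i$'' observation together with the fact that the earliest unprocessed closed vertex is always the first potential game-ender, so at that moment the controlling player's sole concern is that one vertex. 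Each delete-and-splice is $O(1)$ on a doubly linked list, and the splicing is precisely what propagates the cascading equalities $y_{i+1}=y_i$ that your shared-source worry identified; that mechanism, rather than a bounded window, is the missing idea in your outline.
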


\begin{proof}
	According to Proposition \ref{choice}, a player may lose only when he must play on a closed vertex, and he makes choices only on source vertices. This explains why we will consider only source and closed vertices, and we can remove the constrained ones from the path, by connecting their two adjacent vertices. For example, let us consider the game on a path $P$ of length 11, using the sequential function $f$ below (recall that $f^{-1}=g$):
	\begin{figure}[!ht]
		\label{constrained}
		\centering
			\includegraphics[width=90mm]{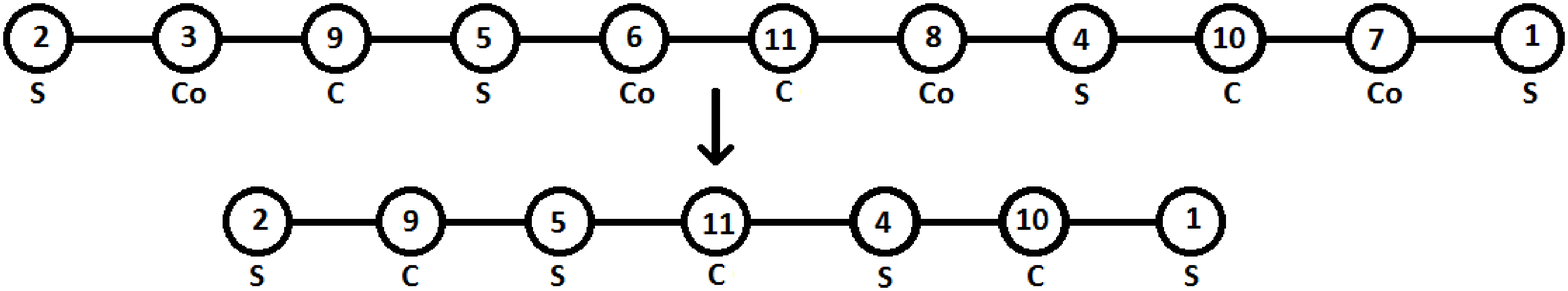}
		\caption{Removing constrained vertex. Labels on vertices are their value using $f^{-1}$, S= Source, C = Closed, Co = Constrained}
	\end{figure}
	We are now looking for the player who will lose. Hence we have to consider closed vertices, ordered by increasing values of $f$. On the above example, the first closed vertex $u_0$ has the value 9 according to $f$. Since 9 is odd, the first player will color it. Consider its two adjacent vertices $v_0$ and $v_1$, with respectively the values 2 and 5. As $v_0$ will be colored first, the first player will be able to color $u_0$ if and only if he owns $v_1$. If he does not, the other player can color $v_0$ and $v_1$ in a such way the first player will lose on $u_0$(by coloring $v_0$ and $v_1$ with the same color).

As $f(v_1)=5$, the first player owns $v_1$ and he will be able to color $u_0$ by choosing the appropriate color on $v_1$.
	Consequently, the first player does not have the choice when coloring $v_1$ and $u_0$, and we can delete them as if they were constrained vertices:
	
	\begin{figure}[!ht]
		\label{constrained2}
		\centering
			\includegraphics[width=75mm]{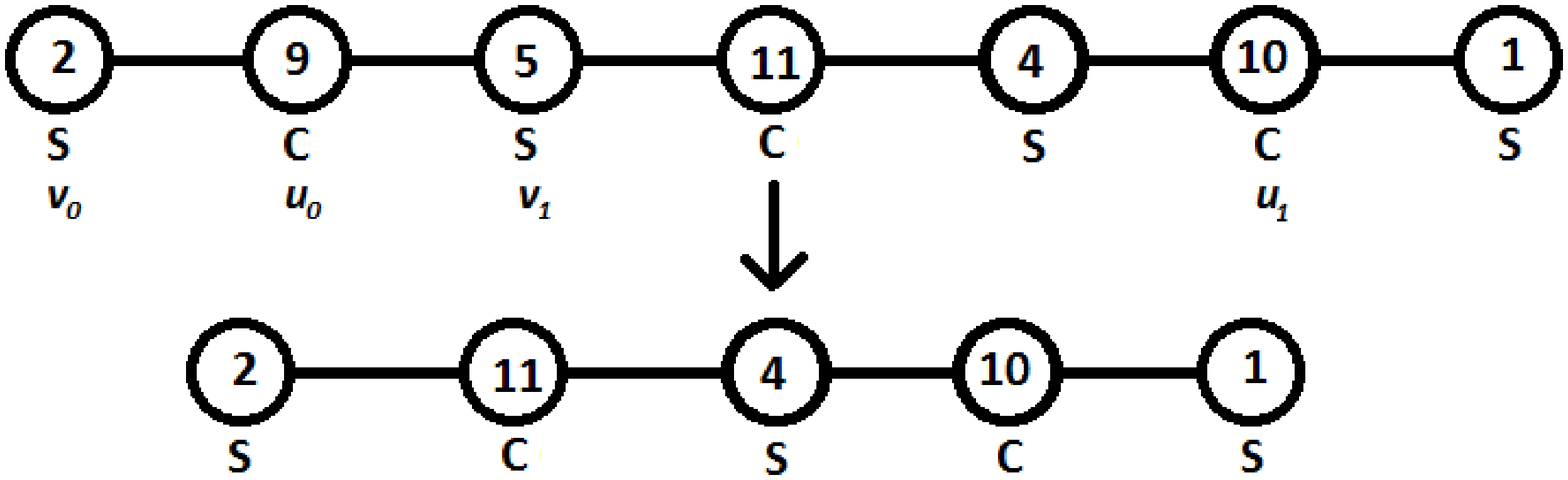}
		\caption{Removing vertices 9 and 5, since the first player has no choice.}
	\end{figure}
	
	By considering the next closed vertex ($u_1$ with the value $10$), and with the same argument, as the second player owns the vertices 10($v_3$) and 4($v_2$), we also delete them.\\
	Then, when considering the last vertices 11, 1 and 2 (respectively $u_2$,$v_4$,$v_5$ on the figure), one can see that the parity of $v_5$ is not the same as $u_2$, hence the owner of the vertices is not the same. Therefore, $u_2$ can not be colored by the first player, who will lose the game.\\
	
	The algorithm to decide who the winner is has five steps:
	\begin{itemize}
		\item Classify the vertices into source, constrained and closed.
		\item Remove the constrained vertices.
		\item Order the closed vertices by increasing values of the $f$ function, and pick them up in this order.
		\item On each closed vertex $u$ picked up, analyse the two adjacents source vertices. If the owner of the vertex $v$ with the higher value of $f$ is the same as $u$, then remove $u$ and $v$ and repeat this operation. Otherwise the game is losing for the owner of $u$.
		\item If there are no more vertices left, and no result yet, the game is winning for the first player if the length of the path is odd. Otherwise it is winning for the second player.
	\end{itemize}
	
	Each step take $O(n)$ operations, especially the ordering because we know the maximum value of $f$. 
\end{proof}

	
	
\section{Conclusions}
        
        This work contains many new results concerning impartial coloring games as alternatives to the strictly partisan graph coloring games of \textsc{Col} and \textsc{Snort}.  We define six new rulesets concerning games that can use any number of colors, two of which, \textsc{Oriented $k$-coloring} (definition \ref{defOrientedColoring}) and \textsc{Oriented Blue-Red-coloring} (definition \ref{defOrientedBlueRedColoring}) are specific to directed graphs.  As far as the authors are aware, only \textsc{Sequential $k$-coloring} has been studied prior to this work.
        
        For many of the rulesets, we study the winnability of specific graph types. For paths, we show how to find the outcome class for starting positions of \textsc{Proper 2-coloring} and for even-length paths in \textsc{2-distance 2-coloring}.  
        
        For cycles, we solve the starting positions for \textsc{Proper 2-coloring}, \textsc{Oriented Blue-Red-coloring}, odd cycles for \textsc{Weak 2-coloring} and even cycles for \textsc{2-distance 2-coloring}.  The result for \textsc{Weak 2-coloring} determines the outcome class for all of positions of odd cycles, not just starting positions.  Furthermore, the game is trivial in this case: the grundy values are all either 0 or 1, meaning that player strategies will not affect the outcome of the game.
        
        For those graphs that have an involution, we show cases which can determine the outcome class of starting positions of \textsc{Proper $k$-coloring}.
        
        Concerning the computational complexity of determining the outcome class, we show that four of these games are \cclass{PSPACE}-complete: \textsc{Proper $k$-coloring}, \textsc{Oriented $k$-coloring}, \textsc{Oriented Blue-Red-coloring} and \textsc{$d$-Distance $k$-coloring} for the case where $d=2$.  For \textsc{Sequential $2$-coloring}, we improve upon a previous result by finding a new algorithm to determine the outcome class of the game in $O(n)$ time.  The complexities of \textsc{$d$-Distance $k$-coloring} for $d>2$ and \textsc{Weak $k$-coloring} remain open.






\end{document}